\documentclass[11pt, a4paper]{amsart}

\usepackage{amsmath,amssymb,amsthm,geometry,mathtools}
\usepackage[hidelinks]{hyperref}
\usepackage{tikz}
\usepackage{placeins,needspace}

\newtheorem{theorem}{Theorem}[section]
\newtheorem{lemma}[theorem]{Lemma}
\newtheorem{proposition}[theorem]{Proposition}
\newtheorem{corollary}[theorem]{Corollary}

\theoremstyle{definition}
\newtheorem{definition}[theorem]{Definition}
\newtheorem{example}[theorem]{Example}

\theoremstyle{remark}
\newtheorem{remark}[theorem]{Remark}

\author{Bobo Hua}
\address{Bobo Hua: School of Mathematical Sciences, LMNS,
	Fudan University, Shanghai 200433, China; Shanghai Center for
	Mathematical Sciences, Fudan University, Shanghai 200438,
	China.}
\email{bobohua@fudan.edu.cn}

\author{Yong Lin}
\address{Yau Mathematical Sciences Center; Department of Mathematical Sciences, Tsinghua University, Beijing, 100084, China}
\email{yonglin@tsinghua.edu.cn}

\author{Haohang Zhang}
\address{Yau Mathematical Sciences Center; Department of Mathematical Sciences, Tsinghua University, Beijing, 100084, China}
\email{zhanghh22@tsinghua.org.cn}

\title{Solvability of semilinear elliptic equations on infinite graphs}

\subjclass[2020]{Primary 35R02, 05C63; Secondary 39A12, 46A04}
\keywords{Infinite graph, semilinear elliptic equation, graph Laplacian, Eidelheit theorem, perfect matching, Cayley graph}

\begin{document}
	
\begin{abstract}
	We develop a constructive method for solving semilinear elliptic equations $\Delta u(x)=f(x,u(x))$ on locally finite, connected infinite graphs with layered structure. Using Eidelheit's theorem, we establish coupling criteria ensuring that arbitrary initial-layer data extend to global solutions for every $f$. We apply combinatorial criteria to prove solvability on leafless infinite trees, integer lattices, the triangular and hexagonal lattices, and a Cayley graph of the discrete Heisenberg group. We further establish solvability for a broad class of Cayley graphs of semidirect products $G\cong\mathbb Z\ltimes_\theta H$. In particular, $\Delta u=e^u$ has infinitely many solutions on $\mathbb Z^2$, but none of finite energy. We also extend the method to the bi-Laplacian under two-step coupling conditions, to the $p$-Laplacian under a unique-neighbor condition, and to magnetic Laplacians.
\end{abstract}
	
	\maketitle
	
	\section{Introduction}
	\label{sec:intro}
	
	The starting point of this work was the following question: does the nonlinear exponential equation
	\begin{equation}
		\label{eq:motivating_exponential}
		\Delta u=e^u
		\qquad\text{on }\mathbb Z^2
	\end{equation}
	admit a global solution? With our sign convention,~\eqref{eq:motivating_exponential} corresponds, up to normalization, to the prescribed curvature equation with constant curvature $-1$. The corresponding continuous equation $\Delta_{\mathbb R^2}u=e^u$ does not admit an entire $C^2$-solution. This was proved for the exponential nonlinearity by Wittich~\cite{Wittich1943} and follows more generally from the Keller--Osserman nonexistence theory for $\Delta u\geq f(u)$~\cite{Keller1957,Osserman1957}. 
	
	The Liouville equation, corresponding to the prescribed curvature $+1$, is $\Delta_{\mathbb R^2}u+e^u=0$ under the same convention. Chen and Li~\cite{CL1991} classified all of its finite-mass solutions. On the lattice $\mathbb Z^2$, Ge, Hua, and Jiang~\cite{GHJ2018} obtained a uniform energy lower bound for solutions of the corresponding discrete equation $\Delta u+e^u=0$ and raised the existence of finite-energy solutions as a natural problem. This problem was recently resolved by Chen and Hua~\cite{CH2025}, who constructed finite-energy solutions on $\mathbb Z^2$.
	
	Following the terminology of~\cite{GHJ2018,CH2025}, we call
	\[
		\mathcal E(u):=\sum_{x\in\mathbb Z^2}e^{u(x)}
	\]
	the energy of $u$. Strict subharmonicity alone gives no contradiction on an infinite lattice, where a global maximum need not be attained. We prove the following.
	
	\begin{corollary}
		\label{cor:exponential_lattice}
		On the two-dimensional integer lattice $\mathbb Z^2$, the equation $\Delta u=e^u$ admits infinitely many global solutions, but no solutions of finite energy. That is, there exists no solution satisfying
		\[
		\sum_{x\in\mathbb Z^2} e^{u(x)}<\infty.
		\]
	\end{corollary}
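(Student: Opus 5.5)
The statement has two parts. I would prove existence by a layer-by-layer construction and non-existence of finite-energy solutions by a summation argument against suitable test functions.

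\emph{Existence.} I would build solutions layer by layer. Sign convention: $\Delta u(x)=\sum_{y\sim x}(u(y)-u(x))$. On $\mathbb Z^2$ take the diagonal layers $L_n=\{(a,b): a+b=n\}$, $n\ge 0$, using only the half-plane first. A cleaner choice is rows $R_n=\{(a,n)\}$. Given arbitrary values on $R_0$ and $R_1$, the equation at $x=(a,n)$ reads
\[
u(a,n+1)=4u(a,n)+e^{u(a,n)}-u(a-1,n)-u(a+1,n)-u(a,n-1).
\]
This determines $R_{n+1}$ uniquely from $R_n$ and $R_{n-1}$, and symmetrically $R_{n-1}$ from $R_n$ and $R_{n+1}$. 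Hence every choice of data on two consecutive rows extends to a global solution on all of $\mathbb Z^2$, and different data give different solutions. This yields infinitely many (indeed uncountably many) solutions. It is presumably the special case of the paper's general coupling criterion, where each vertex of a layer has a unique ``successor'' in the next layer, i.e.\ a perfect matching between consecutive layers. On $\mathbb Z^2$, though, the explicit recursion suffices and needs no Eidelheit argument.

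\emph{Non-existence of finite energy.} Suppose $\sum e^{u}<\infty$. Then $f:=\Delta u\ge 0$ is summable with $S=\sum_x \Delta u(x)>0$. I would test against cutoffs $\eta_R(x)=\max(0,1-\log(|x|/R)/\log R)$ type log-cutoffs, which have $\sum|\nabla\eta_R|^2\to 0$ in two dimensions. Summation by parts gives
\[
\sum \eta_R\Delta u=-\sum_{x\sim y}(\eta_R(y)-\eta_R(x))(u(y)-u(x)),
\]
and I need control of the gradient of $u$. Rather than that, I would use a discrete Green-function/averaging argument. Let $m(r)$ be the average of $u$ on the sphere or box boundary of radius $r$. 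The flux identity $\sum_{B_r}\Delta u=\sum_{\partial B_r}\partial_n u$ gives that the total outward flux through $\partial B_r$ is at least $c>0$ for large $r$, since it increases to $S$. Dividing by $|\partial B_r|\sim 8r$ gives $m(r+1)-m(r)\gtrsim S/(8r)$. Hence $m(r)\ge \tfrac{S}{8}\log r - C$, so $u$ grows logarithmically on average. By Jensen, the average of $e^{u}$ over $\partial B_r$ is at least $e^{m(r)}\gtrsim r^{S/8}$. Summing gives $\sum_{\partial B_r}e^{u}\gtrsim r^{1+S/8}$, which is not summable over $r$, contradicting finite energy. In fact $\sum_r r\, r^{S/8}=\infty$ already for any $S\ge 0$, since mere boundedness below of $m$ makes the energy infinite.

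\emph{Main obstacle.} The hard step is the discrete flux-to-mean comparison. Square boxes in $\mathbb Z^2$ have corners, so the flux through $\partial B_r$ and the difference of boundary averages are not exactly related. I would handle this by using the $\ell^\infty$-box $B_r$ and writing $\sum_{B_r}\Delta u$ as the sum over the boundary edges $E(B_r,B_r^c)$ of $u(y)-u(x)$. Corner vertices contribute two outgoing edges, so I would define $m(r)$ as an edge-weighted average over the boundary edges instead of a vertex average. Jensen then still applies to $e^u$ with multiplicity at most $2$. The inequality needed is $m(r+1)\ge m(r)+\text{flux}/|E_r|$ with $|E_r|=8r+4$, and $m$ need not even increase strictly. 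As a fallback, if the telescoping is messy, note that the argument only needs $m(r)\ge -C$. That follows from the flux being nonnegative for all $r$, since $\Delta u>0$ everywhere, so I can start at $r=0$ with no large-$r$ caveat. Then $\sum_{\partial B_r}e^u\ge c\,r\,e^{-C}$, which diverges.
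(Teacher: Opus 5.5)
\textbf{Existence.} Your existence argument is correct and follows the paper's own route. The row recursion is exactly Proposition~\ref{prop:lattice_bijection}: the interlayer coupling operators on $\mathbb Z\times\mathbb Z$ are identities, so data on two adjacent rows propagate uniquely in both directions.

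\textbf{Non-existence: the gap.} The finite-energy part has a genuine gap at exactly the step you flagged, and the edge-weighted average does not close it. With $F_r=\sum_{B_r}\Delta u$, the flux identity only gives $\frac{1}{|E_r|}\sum_{e\in E_r}u(e^+)=m(r)+F_r/|E_r|$. Here the outer endpoints $e^+$ are the non-corner vertices of $\partial B_{r+1}$, each counted once. Your $m(r+1)$ also counts the four corners of $\partial B_{r+1}$ twice, and the flux says nothing about $u$ there.

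These corner contributions are not lower order. The polynomial $h(a,b)=a^4-6a^2b^2+b^4-a^2-b^2$ satisfies $\Delta h\equiv0$ on $\mathbb Z^2$, so every flux vanishes. Yet your edge-weighted averages are $m(0)=0$, $m(1)=-4$, $m(2)=-31.2$, and in general $m(r)\sim-\tfrac{4}{5}r^4$. The same happens in the continuum for $\operatorname{Re}z^4$: uniform averages over square boundaries are not monotone for subharmonic functions. The monotone mean-value quantity uses harmonic measure, which is far from uniform near the corners.

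Hence neither $m(r)\ge\tfrac{S}{8}\log r-C$ nor even $m(r)\ge-C$ follows from nonnegativity of the flux. Your fallback is circular, because it still relies on the telescoping inequality $m(r+1)\ge m(r)$.

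\textbf{The missing idea.} It is much simpler, and it is the paper's proof. Finite energy forces every superlevel set $\{u\ge M\}$ to be finite, since an infinite one would contribute infinitely many terms $\ge e^M$ to $\sum e^{u}$. So $u$ attains a global maximum at some $x_0$, and then $\Delta u(x_0)\le0<e^{u(x_0)}$, a contradiction.

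If you prefer a mean-value argument, it must be a genuine one. For example, let $(X_n)$ be simple random walk started at $0$. Then $\Delta u\ge0$ makes $u(X_n)$ a submartingale, so Jensen gives $e^{u(0)}\le\mathbb E\,e^{u(X_n)}\le\sup_x\mathbb P(X_n=x)\sum_xe^{u(x)}\to0$. That works, but it imports heat-kernel decay on $\mathbb Z^2$, which the maximum-principle argument avoids entirely.
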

	
	This separates unrestricted solvability from finite-energy existence.
	
	Nonlinear elliptic equations on finite graphs and bounded graph domains are commonly treated by spectral and variational methods~\cite{Chung1997,Grigor2018,GLY2016Yamabe,GLY2016KW,Ge2017}. On infinite locally finite graphs, existence theories usually impose structural hypotheses and seek positive, weak, finite-energy, or otherwise constrained solutions. Representative results cover Yamabe, Kazdan--Warner, $p$-Laplacian, and poly-Laplacian equations~\cite{GLY2017,GeJiangYamabe2018,GJ2018,ChangZhang2021,PinamontiStefani2022,ManZhang2024}, power-type equations on Cayley graphs~\cite{HuaLiWang2023}, and Schr\"odinger or diffusion equations in prescribed function spaces~\cite{SchmidtZimmermann2025,BerchioEtAl2026}.
	
	We instead work in the full product space $\mathbb R^V:=\{u:V\to\mathbb R\}$ and ask whether arbitrary data on one or two initial layers extend to a solution for an arbitrary source function $f:V\times\mathbb R\to\mathbb R$. No positivity or function-space constraint is imposed. Here and below, Yamabe, Kazdan--Warner, and Liouville-type equations refer only to the algebraic form of their nonlinearities, not to positive, finite-energy, or finite-mass solutions.
	
	For the linear background, surjectivity of the combinatorial Laplacian on an infinite locally finite graph was proved by projective-limit and Fr\'echet-space methods~\cite{Dodziuk2012,Kalmes2016}. Finite-hopping analogues include magnetic Schr\"odinger operators on discrete vector bundles~\cite{KobersteinSchmidt2020}; translation-invariant finite-hopping operators on Cayley graphs are also related to linear cellular automata and group-ring methods~\cite{CeccheriniCoornaert2006,CeccheriniCoornaert2008}.
	
	The main result is a global extension theorem for arbitrary nonlinearities. We use layers $V_n$, indexed by $\mathcal I\in\{\mathbb N,\mathbb Z\}$, with edges joining vertices in the same or consecutive layers; see Definition~\ref{def:layered_decomposition} for the precise formulation. The coupling operators $M_{i,j}$ encode interactions between adjacent layers. Their construction and the \emph{global coupling condition}, which requires finite-row independence in the directions of extension, are detailed in Section~\ref{sec:linear}.

	Eidelheit's theorem connects this condition with the surjectivity needed to extend prescribed initial-layer data. We obtain the following result.
	
	\begin{theorem}\label{thm:mainSur}
		Let $G$ be a connected, locally finite, infinite graph equipped with a layered decomposition satisfying the global coupling condition, and let
		\[
			\mathcal S_f:=\{u\in\mathbb R^V:\Delta u(x)=f(x,u(x))\text{ for every }x\in V\}.
		\]
		For every function $f:V\times\mathbb R\to\mathbb R$, the following assertions hold.
		\begin{enumerate}
			\item If the decomposition is $\mathbb N$-layered, the restriction map $\mathcal S_f\to\mathbb R^{V_0}$ is surjective.
			\item If the decomposition is $\mathbb Z$-layered, the restriction map $\mathcal S_f\to\mathbb R^{V_{-1}}\times\mathbb R^{V_0}$ is surjective.
		\end{enumerate}
		In particular, the equation has infinitely many solutions.
	\end{theorem}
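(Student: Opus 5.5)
We construct solutions layer by layer. For a vertex $x\in V_n$, the equation $\Delta u(x)=f(x,u(x))$ involves only the values of $u$ on $V_{n-1}\cup V_n\cup V_{n+1}$. Split the Laplacian accordingly:
\[
\Delta u(x)=\sum_{y\sim x,\,y\in V_{n+1}}(u(y)-u(x))+\bigl(\text{terms involving only }V_{n-1}\cup V_n\bigr).
\]
Once $u$ is known on $V_{n-1}\cup V_n$, the whole equation on $V_n$ collapses to a linear equation for the unknown $u|_{V_{n+1}}$:
\[
M_{n,n+1}\,u|_{V_{n+1}}=g_n .
\]
Here $g_n\in\mathbb R^{V_n}$ is an explicit function of $u|_{V_{n-1}\cup V_n}$ and $f$. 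The nonlinearity only enters through the already determined value $u(x)$, so $g_n$ is simply some vector, with no constraint on it. The map $M_{n,n+1}:\mathbb R^{V_{n+1}}\to\mathbb R^{V_n}$ is the coupling operator from Section~\ref{sec:linear}; it is the continuous linear map with matrix entries $M(x,y)=w_{xy}$, or $1$, for $x\sim y$.

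So the theorem reduces to showing that each forward coupling operator is surjective onto $\mathbb R^{V_n}$. Note that $V_n$ may be infinite, with a countable index set.

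\begin{enumerate}
\item Apply Eidelheit's theorem. A continuous linear map between countable products $\mathbb R^{J}\to\mathbb R^{I}$ with matrix $(a_{ij})$ is surjective if and only if the rows are linearly independent in the finite sense: no nontrivial finite linear combination of rows vanishes. Local finiteness makes each row finitely supported, and the operator continuous in the product topology. I take the global coupling condition, "finite-row independence in the directions of extension", to be exactly this hypothesis for $M_{n,n+1}$ for all $n\ge0$. In the $\mathbb Z$-layered case it should also hold for $M_{n,n-1}$ for all $n\le -1$. Eidelheit then gives surjectivity directly.
\item Handle the start of the recursion in the $\mathbb N$-layered case. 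Given $u|_{V_0}$, the equation on $V_0$ has no $V_{-1}$-terms. It determines $u|_{V_1}$ via $M_{0,1}$ from data on $V_0$ alone.
\item Run the induction. For $n\ge1$, we use $u|_{V_{n-1}\cup V_n}$ to solve for $u|_{V_{n+1}}$. The induction runs over all $n$ and produces $u\in\mathbb R^V$. Every vertex lies in some $V_n$, and its equation was imposed at step $n$. Hence $u\in\mathcal S_f$ and restricts to the prescribed data.
\item Treat the $\mathbb Z$-layered case. We prescribe data on $V_{-1}\times V_0$. The equations on $V_0,V_1,\dots$ determine $V_1,V_2,\dots$ forward, as above. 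Symmetrically, the equation on $V_{-1}$ determines $V_{-2}$ through $M_{-1,-2}$, then $V_{-2}$ determines $V_{-3}$, and so on backward. Each vertex equation is used exactly once and involves only already-fixed values plus the newly solved layer, so no consistency conflict arises.
\end{enumerate}

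For infinitely many solutions: $V_0$ is nonempty, and distinct initial data give distinct solutions. Hence $\mathcal S_f$ surjects onto $\mathbb R^{V_0}\neq\{0\}$, which is an infinite set.

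The main obstacle is step 1, specifically the case of infinite layers. There the solvability of $M_{n,n+1}v=g$ is a genuinely infinite linear system, and it is exactly Eidelheit's theorem that supplies surjectivity. One must verify that each equation row has finite support and that rows are indexed by the countable set $V_n$; both follow from local finiteness and countability of $V$. One must also check that the hypotheses match the paper's precise wording of the coupling condition. A further point: surjectivity is needed onto arbitrary $g_n$, not onto a restricted family, because $f$ is arbitrary. Finally, the recursive choices involve only countably many steps, which is harmless: each step simply picks one solution of the system.
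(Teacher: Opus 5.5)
Your proposal is correct and follows the paper's proof. The equation on each layer becomes a linear system for the next unknown layer: forward for $n\ge 0$, and backward for $n\le -1$ in the $\mathbb Z$-layered case. Solvability for arbitrary right-hand sides is then exactly the Eidelheit-based surjectivity of Proposition~\ref{prop:eig_solvability}. The one detail you leave implicit is Eidelheit's second condition. It holds automatically here, because a functional on the domain layer bounded by $c\,p_k$ lies in the finite-dimensional span of the point evaluations $\delta_y$, $y\in K_k$, for a finite exhaustion $(K_k)$ of that layer.
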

	
	Finite-row independence admits an exact finite-dimensional characterization by nonzero full-row-size minors; unique perfect matchings provide a transparent sufficient condition.
	
	\begin{theorem}\label{thm:mainFinite}
		A coupling operator $M_{i,j}$ satisfies the finite-row independence condition if and only if, for every finite subset $U \subset V_i$, there exists a subset $K \subset V_j$ with $|K| = |U|$ such that the corresponding bipartite adjacency submatrix $M_{U,K}$ has nonzero determinant.

		Finite-row independence follows if, for every such $U$, one can choose $K$ so that the induced bipartite graph between $U$ and $K$ has a unique perfect matching. A stronger sufficient condition is the existence of an injective map $\phi:V_i\to V_j$ such that $\phi(x)$ is the unique neighbor of $x$ in $V_j$ for every $x\in V_i$.
	\end{theorem}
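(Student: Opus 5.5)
The plan is to unwind the definitions so that everything reduces to linear algebra on finite matrices. I take $M_{i,j}$ to be the matrix indexed by $V_i\times V_j$ whose $(x,y)$ entry is the edge weight between $x$ and $y$: nonzero exactly when $x\sim y$, and zero otherwise. I take finite-row independence to mean that every finite family of rows $\{M_{x,\cdot}\}_{x\in U}$, with $U\subset V_i$ finite, is linearly independent in $\mathbb R^{V_j}$. If the paper's normalization differs (for instance signs, or a $-1$ factor from the Laplacian), only the fact that entries are nonzero precisely on edges will be used.

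\textbf{The determinant criterion.} Fix a finite $U\subset V_i$. By local finiteness, $N(U)\cap V_j$ is a finite set, say of size $m$, and every column of $M_{U,V_j}$ outside $N(U)\cap V_j$ vanishes. Linear independence of the rows indexed by $U$ is therefore equivalent to the finite $|U|\times m$ matrix $M_{U,N(U)\cap V_j}$ having rank $|U|$. By the standard fact that rank equals the size of the largest nonvanishing minor, this holds if and only if some $|U|\times|U|$ minor is nonzero.

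The minor is taken over a column set $K\subset N(U)\cap V_j$ with $|K|=|U|$. Conversely, any $K\subset V_j$ with $\det M_{U,K}\neq0$ has no zero column, so $K\subset N(U)\cap V_j$ automatically. This gives the equivalence in both directions.

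\textbf{Unique perfect matchings.} Expand by Leibniz:
\[
\det M_{U,K}=\sum_{\sigma:U\to K\text{ bijective}}\operatorname{sgn}(\sigma)\prod_{x\in U}M_{x,\sigma(x)}.
\]
A term is nonzero exactly when $x\sim\sigma(x)$ for all $x\in U$, that is, when $\sigma$ is a perfect matching of the induced bipartite graph between $U$ and $K$. If that matching is unique, the sum has exactly one nonzero term, a product of nonzero weights, so the determinant is nonzero. The first part then gives finite-row independence. No cancellation issue arises, since only one term survives.

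\textbf{The unique-neighbor map.} Given an injective $\phi$ such that $\phi(x)$ is the unique neighbor of $x$ in $V_j$, take $K=\phi(U)$; then $|K|=|U|$ by injectivity. In the induced bipartite graph each $x\in U$ has exactly one neighbor, namely $\phi(x)$, so any perfect matching must send $x\mapsto\phi(x)$. This map is itself a perfect matching, so it is the unique one, and the previous step applies. Equivalently, after ordering $K$ by $\phi$, $M_{U,K}$ is diagonal with nonzero diagonal entries.

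The only step needing care is the first: justifying the reduction to finitely many columns via local finiteness, and making sure finite-row independence is meant in the algebraic sense of finite linear combinations of finitely supported rows. Once that is settled, the rest is routine.
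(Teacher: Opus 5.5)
Your proposal is correct and follows essentially the same route as the paper: local finiteness reduces to the finite active matrix $M_{U,N(U)\cap V_j}$, then the maximal-minor rank criterion, a single surviving Leibniz term for a unique perfect matching, and $K=\phi(U)$ for the unique-neighbor map. Your use of arbitrary nonzero edge weights and your explicit check that any $K$ with $\det M_{U,K}\neq0$ lies in $N(U)\cap V_j$ are harmless refinements of the paper's $0$--$1$ argument, in which the determinant is $\pm1$.
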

	
	These criteria apply to locally finite infinite trees of minimum degree at least two, integer and planar lattices, and several Cayley graphs, including the discrete Heisenberg group $\mathcal{H}_3(\mathbb Z)$. On $\mathbb Z^d$, Proposition~\ref{prop:lattice_bijection} gives more: restriction is a bijection from the solution set onto the space of data on any two adjacent layers. Hence $\Delta u=f(x,u(x))$ has infinitely many global solutions for every $f$, including nonlinearities of Yamabe, Kazdan--Warner, and Liouville type. Section~\ref{subsec:semidirect_products} then treats broader classes of semidirect-product Cayley graphs, with solvability for left-orderable base groups and determinant criteria for finite and split mixed base groups.
	
	\section{Preliminaries}
	\label{sec:preliminaries}
	
	\subsection{Infinite Graphs and the Laplacian}
	
	Let $G = (V, E)$ be a connected, locally finite, undirected simple graph with a countably infinite vertex set $V$. Vertices $x, y \in V$ are adjacent, denoted by $x \sim y$, if $\{x, y\} \in E$. Local finiteness implies the degree of any vertex $x$, denoted by $\deg(x) = |\{y \in V : y \sim x\}|$, is finite.
	
	For a subset $U\subset V$, its vertex neighborhood is defined by
	\[
		N(U):=\{y\in V:\text{there exists }x\in U\text{ such that }y\sim x\}.
	\]
	In particular, local finiteness implies that $N(U)$ is finite whenever $U$ is finite.
	
	Equip the vector space $\mathbb{R}^V$ with the topology of pointwise convergence. It is then a Fr\'{e}chet space, and its topology is characterized by the family of seminorms
	\begin{align}\label{eq:seminorms}
		p_K(u) := \sum_{x \in K} |u(x)|, \quad u \in \mathbb{R}^V,
	\end{align}
	where $K \subset V$ ranges over all finite subsets of vertices. Under this topology, the dual space is identified with the space of finitely supported functions $v: V \to \mathbb{R}$.
	
	The unnormalized combinatorial Laplacian $\Delta: \mathbb{R}^V \to \mathbb{R}^V$ is defined by
	\begin{equation}
		\label{eq:laplacian}
		\Delta u(x) = \sum_{y \sim x} (u(y) - u(x)).
	\end{equation}
	Local finiteness of $G$ ensures that the sum in~\eqref{eq:laplacian} contains only finitely many terms for each $x \in V$. Thus, $\Delta$ is a continuous linear operator on $\mathbb{R}^V$; it has finite hopping range and satisfies the discrete maximum principle~\cite{Kalmes2016}.
	
	\subsection{Layered Decomposition}
	
	To formulate the extension argument, we partition the vertex set into layers so that the equation on each layer involves only that layer and its immediate neighbors. The following definition specifies this structure.
	
	\begin{definition}
		\label{def:layered_decomposition}
		A graph $G = (V, E)$ admits an \textit{$\mathcal{I}$-layered decomposition} over an index set $\mathcal{I} \in \{\mathbb{N}, \mathbb{Z}\}$ if its vertex set can be partitioned as $V = \bigsqcup_{n \in \mathcal{I}} V_n$, where each layer $V_n$ is a nonempty subset of $V$, such that for any $x \in V_n$, its neighbors $y \sim x$ belong to $V_{n-1} \cup V_n \cup V_{n+1}$ (with $V_{-1} = \emptyset$ for $\mathcal{I} = \mathbb{N}$).
	\end{definition}
	
	No restriction is imposed on the cardinality of an individual layer: the sets $V_n$ may be finite or countably infinite, and their cardinalities need not be uniform.
	
	\begin{example}
		\label{ex:layered_graphs}
		The following examples illustrate layered decompositions of infinite graphs.
		\begin{enumerate}
			\item \textit{Integer lattices.}
			Write $\mathbb Z^d\cong\mathbb Z\times\mathbb Z^{d-1}$ and set $V_n=\{(n,x'):x'\in\mathbb Z^{d-1}\}$. A nearest-neighbor edge either remains in $V_n$ or changes the first coordinate by one, so these layers give a $\mathbb Z$-layered decomposition.
			
			\item \textit{Distance spheres.}
			Fix a root $x_0\in V$ and define $V_n=\{x\in V:d(x,x_0)=n\}$ for $n\in\mathbb N$. If $x\sim y$, then $|d(x,x_0)-d(y,x_0)|\leq1$, so the distance spheres form an $\mathbb N$-layered decomposition. This observation provides a general source of layered decompositions, although it does not by itself imply the global coupling condition.
			
			\item \textit{Infinite trees.}
			For a rooted infinite tree, define the root to have depth zero and every other vertex to have depth one greater than that of its parent. If $V_n$ is the set of vertices of depth $n$, then $(V_n)_{n\in\mathbb N}$ is an $\mathbb N$-layered decomposition, and every edge joins consecutive layers.
			
			\Needspace{16\baselineskip}
			\item \textit{The discrete Heisenberg group.}
			The group $\mathcal{H}_3(\mathbb{Z})$ consists of $3 \times 3$ upper triangular integer matrices with unit diagonal. We use coordinates $(x,y,z)\in\mathbb Z^3$ through
			\[
				(x,y,z)
				\longleftrightarrow
				\begin{pmatrix}
					1 & x & z \\
					0 & 1 & y \\
					0 & 0 & 1
				\end{pmatrix}.
			\]
			The multiplication law is therefore given directly by matrix multiplication:
			\[
				\begin{pmatrix}
					1 & x_1 & z_1 \\
					0 & 1 & y_1 \\
					0 & 0 & 1
				\end{pmatrix}
				\begin{pmatrix}
					1 & x_2 & z_2 \\
					0 & 1 & y_2 \\
					0 & 0 & 1
				\end{pmatrix}
				=
				\begin{pmatrix}
					1 & x_1+x_2 & z_1+z_2+x_1y_2 \\
					0 & 1 & y_1+y_2 \\
					0 & 0 & 1
				\end{pmatrix}.
			\]
			Set $X=(1,0,0)$, $Y=(0,1,0)$, and $Z=(0,0,1)$, and take the symmetric generating set $S = \{X^{\pm 1}, Y^{\pm 1}, Z^{\pm 1}\}$. With $V_n=\{(x,n,z):x,z\in\mathbb Z\}$, right multiplication by $X^{\pm1}$ and $Z^{\pm1}$ preserves the layer, whereas right multiplication by $Y^{\pm1}$ moves to the adjacent layer.
		\end{enumerate}
	\end{example}

	\section{Coupling Operators}
	\label{sec:linear}
	
	For a graph admitting an $\mathcal{I}$-layered decomposition $V = \bigsqcup_{n \in \mathcal{I}} V_n$, write $u_n=u|_{V_n}\in\mathbb R^{V_n}$ for the restriction of $u\in\mathbb R^V$ to the $n$th layer. The space $\mathbb{R}^{V_n}$ is equipped with the topology of pointwise convergence, which reduces to the standard Euclidean topology when $V_n$ is finite.

	The first index of $M_{i,j}$ is the layer on which the equation is evaluated, and the second is the layer containing the input. Define the following continuous linear operators:
	\begin{itemize}
		\item For $|i-j|=1$, the \textit{coupling operator} $M_{i,j}:\mathbb R^{V_j}\to\mathbb R^{V_i}$ is defined by
		\[
			(M_{i,j}v)(x)=\sum_{\substack{y\in V_j\\y\sim x}}v(y),
			\qquad x\in V_i.
		\]
		It is called forward when $j=i+1$ and backward when $j=i-1$.
		\item The \textit{diagonal Laplacian block} $\Delta_n: \mathbb{R}^{V_n} \to \mathbb{R}^{V_n}$ is defined for $w \in \mathbb{R}^{V_n}$ and $x \in V_n$ by $(\Delta_n w)(x) = \sum_{y \in V_n, y \sim x} w(y) - \deg(x)w(x)$.
	\end{itemize}

	The term \emph{coupling operator} refers to the linear operator itself; its vertex-indexed coefficient array is the corresponding \emph{coupling matrix}. For finite $U\subset V_i$ and $K\subset V_j$, we write $M_{U,K}$ for the submatrix with rows indexed by $U$ and columns by $K$. The diagonal term in $\Delta_n$ uses the global degree $\deg(x)$ of $x$ in $G$, rather than the degree in the subgraph induced by $V_n$. Local finiteness ensures that these operators are continuous and that every row of each coupling matrix has finite support.

	By Definition~\ref{def:layered_decomposition}, the equation on $V_n$ depends only on $u_{n-1}$, $u_n$, and $u_{n+1}$. For a nonlinearity $f:V\times\mathbb R\to\mathbb R$, it takes the form
	\begin{equation}
		\label{eq:layer_decomposition}
		(\Delta_n u_n)(x) + \bigl(M_{n,n+1} u_{n+1}\bigr)(x) + \bigl(M_{n,n-1} u_{n-1}\bigr)(x) = f(x, u_n(x)).
	\end{equation}

	\subsection{Finite-Row Independence}
	
	For an $\mathbb N$-layered graph one prescribes $u_0$, whereas for a $\mathbb Z$-layered graph one prescribes two adjacent layers, say $(u_{-1},u_0)$. Isolating the next unknown layer transforms the global equation into a sequential extension problem.
	
	When $\mathcal{I}=\mathbb N$, with $V_{-1}=\emptyset$, the forward extension relation is
	\begin{equation}
		\label{eq:forward_extension}
		M_{n,n+1} u_{n+1} = f(\cdot, u_n) - \Delta_n u_n - M_{n,n-1} u_{n-1}.
	\end{equation}
	
	The $\mathbb{N}$-layered system is solvable for arbitrary initial data and nonlinearity $f$ provided each $M_{n,n+1}$ is surjective.
	
	\begin{definition}
		\label{def:finite_surjectivity}
		A coupling operator $M_{i,j}: \mathbb{R}^{V_j} \to \mathbb{R}^{V_i}$ satisfies the \textit{finite-row independence condition} if for every finite subset $U \subset V_i$, the restricted operator
		\[ \pi_U \circ M_{i,j}: \mathbb{R}^{V_j} \to \mathbb{R}^U \]
		is surjective, equivalently has rank $|U|$, where $\pi_U$ denotes coordinate restriction to $U$. Equivalently, the coordinate functionals $v\mapsto(M_{i,j}v)(x)$, $x\in U$, are linearly independent.
	\end{definition}
	
	The topology of pointwise convergence and the local finiteness of $G$ ensure that the coupling operators are continuous linear operators between Fr\'{e}chet spaces. We use the following form of Eidelheit's theorem~\cite{Eidelheit1936,Kalmes2016}.
	
	\begin{lemma}
		\label{lem:eidelheit_strict}
		Let $\mathbb K\in\{\mathbb R,\mathbb C\}$, let $E$ be a Fr\'{e}chet space over $\mathbb K$ equipped with an increasing fundamental system of seminorms $(p_k)_{k \in \mathbb{N}}$, and let $\omega_{\mathbb K}=\mathbb K^{\mathbb N}$. A continuous $\mathbb K$-linear map $A:E\to\omega_{\mathbb K}$ is surjective if and only if:
		\begin{enumerate}
			\item The sequence of coordinate functionals $(A_j)_{j \in \mathbb{N}}$ defined by $A_j = \pi_j \circ A \in E'$, where $\pi_j$ is the $j$th coordinate projection, is linearly independent.
			\item For every $k \in \mathbb{N}$, the following subspace of the dual space $E'$ is finite-dimensional:
			\begin{equation*}
				\dim \left( \left\{ \phi \in E' : \exists c > 0,\ \lvert\phi(x)\rvert \le c\,p_k(x)\ \text{for all }x\in E \right\} \cap \operatorname{span}\{ A_j : j \in \mathbb{N} \} \right) < \infty.
			\end{equation*}
		\end{enumerate}
	\end{lemma}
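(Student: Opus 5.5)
This is Eidelheit's theorem, cited from the literature. I would give the standard proof, recast in terms of dual data. Write $A_j=\pi_j\circ A\in E'$. Let $U_k=\{x:p_k(x)<1\}$ and let $E'_k$ be the space of functionals bounded by a multiple of $p_k$, so that $E'=\bigcup_k E'_k$.

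\emph{Necessity.} Assume $A$ is surjective.
\begin{enumerate}
\item If $\sum_{j\le N}c_jA_j=0$, apply this relation to a preimage of the $j$th unit vector. This gives $c_j=0$, so the $A_j$ are linearly independent.
\item By the open mapping theorem, $A(U_k)$ is a zero-neighbourhood in $\omega_{\mathbb K}$. Hence it contains a set $\{y:|y_j|<\varepsilon,\ j\le N\}$ for some $N$ and $\varepsilon>0$.
\item Suppose $\phi=\sum_j c_jA_j\in E'_k$ is a finite combination. Then $\phi=\psi\circ A$ with $\psi(y)=\sum c_jy_j$. Since $|\phi|$ is bounded on $U_k$, $|\psi|$ is bounded on $A(U_k)$.
\item That set contains the whole subspace $\{y_1=\dots=y_N=0\}$. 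A linear functional bounded on a subspace vanishes there, so $c_j=0$ for $j>N$.
\item Thus $E'_k\cap\operatorname{span}\{A_j\}\subset\operatorname{span}\{A_1,\dots,A_N\}$, which is finite-dimensional.
\end{enumerate}

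\emph{Sufficiency.} Assume (1) and (2). Set $F_k=E'_k\cap\operatorname{span}\{A_j\}$, an increasing sequence of finite-dimensional spaces exhausting the span.
\begin{enumerate}
\item By (1), each $F_k\subset\operatorname{span}\{A_1,\dots,A_{N_k}\}$ for some increasing sequence $N_k$.
\item Fix a target $y\in\omega_{\mathbb K}$. I build $x=\sum_k x_k$ inductively, with the partial sums solving the first $N_k$ equations exactly and with $p_k(x_{k+1})\le 2^{-k}$. This makes the series converge in $E$, and by continuity $Ax=y$.
\item Step $k$ needs the following claim. Given any prescribed values for $A_1,\dots,A_{N_{k+1}}$ that vanish on $A_1,\dots,A_{N_k}$, there is $z\in E$ realising them with $p_k(z)$ arbitrarily small.
\item This is a duality statement. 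Let $W=\ker A_1\cap\dots\cap\ker A_{N_k}$. The claim says the map $W\to\mathbb K^{N_{k+1}-N_k}$ given by the remaining $A_j$ stays surjective when restricted to $W\cap\varepsilon U_k$.
\item Suppose it fails. The image of $W\cap\varepsilon U_k$ is a balanced convex set; failure means it lies in a proper subspace or is bounded. Hahn--Banach then produces a nonzero combination $\sum_{N_k<j\le N_{k+1}}c_jA_j$ that is $p_k$-bounded on $W$.
\item Extend it, using the finitely many functionals $A_1,\dots,A_{N_k}$, to an element $\sum_{j\le N_{k+1}}c'_jA_j$ bounded by $p_k$ on all of $E$. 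This element lies in $F_k\subset\operatorname{span}\{A_1,\dots,A_{N_k}\}$, contradicting linear independence (1).
\item Making this $p_k$-boundedness extension precise is the main obstacle. It works because $W$ has finite codimension cut out by $A_1,\dots,A_{N_k}$; one uses the quotient seminorm of $p_k$ on $E/W$, which is finite-dimensional.
\end{enumerate}

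For the paper's application, this lemma is used with $E=\mathbb R^{V_j}$ and target $\mathbb R^{V_i}\cong\omega$ (assuming $V_i$ is countably infinite; the finite case is plain linear algebra). In that setting condition (2) is automatic: finitely supported $A_j$ and local finiteness mean $E'_k$ sees only finitely many rows. Condition (1) is then exactly finite-row independence.
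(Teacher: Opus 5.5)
The paper does not prove this lemma. It quotes it from Eidelheit~\cite{Eidelheit1936} and Kalmes~\cite{Kalmes2016} and uses it only in Proposition~\ref{prop:eig_solvability}, where condition~(2) is trivial. So you are supplying the classical argument rather than retracing the paper, and your outline is correct.

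Necessity follows from the open mapping theorem: a $p_k$-bounded combination $\psi\circ A$ forces $\psi$ to be bounded on some $\{y:|y_j|<\varepsilon,\ j\le N\}$, hence $c_j=0$ for $j>N$. Sufficiency follows from the series $x=\sum_k x_k$ with $p_k(x_{k+1})\le 2^{-k}$, together with a finite-dimensional duality claim. The citation buys brevity and the general theorem. Your version makes the paper self-contained and shows where each hypothesis enters: (2) produces the cut-off indices $N_k$, and (1) gives the final contradiction.

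A few points need tightening.

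\begin{enumerate}
\item The inclusion $F_k\subset\operatorname{span}\{A_1,\dots,A_{N_k}\}$ comes from the finite dimensionality in (2), not from (1). You should also choose $N_k\to\infty$ so that every equation is eventually enforced.
\item A balanced convex set $C\subsetneq\mathbb K^m$ need not lie in a proper subspace or be bounded; a slab is neither. What is true, and all you use, is that some nonzero functional on $\mathbb K^m$ is bounded on $C$. Either $C$ fails to span $\mathbb K^m$, or $C$ is absorbing and its Minkowski functional is a nonzero seminorm, which dominates some nonzero functional.
\item The step you flag as the main obstacle is immediate. Suppose $\phi=\sum_{N_k<j\le N_{k+1}}c_jA_j$ satisfies $|\phi|\le Cp_k$ on $W$. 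Extend $\phi|_W$ by Hahn--Banach to $\chi\in E'$ with $|\chi|\le Cp_k$ on all of $E$. Then $\phi-\chi$ vanishes on $W=\bigcap_{j\le N_k}\ker A_j$, so it lies in $\operatorname{span}\{A_1,\dots,A_{N_k}\}$. Hence $\chi\in F_k\subset\operatorname{span}\{A_1,\dots,A_{N_k}\}$, so $\phi\in\operatorname{span}\{A_1,\dots,A_{N_k}\}$, which contradicts (1). Your quotient-seminorm route can be made to work, but it amounts to a hand-made version of this extension.
\item In your closing remark about the application, (2) holds simply because $E'_k=\operatorname{span}\{\delta_y:y\in K_k\}$ is itself finite-dimensional. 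No further property of the rows is needed.
\end{enumerate}
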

	
	\begin{proposition}
		\label{prop:eig_solvability}
		The coupling operator $M_{i,j}$ is surjective if and only if it satisfies the finite-row independence condition.
	\end{proposition}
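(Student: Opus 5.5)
The plan is to apply Lemma~\ref{lem:eidelheit_strict} with $E=\mathbb R^{V_j}$ and target $\mathbb R^{V_i}$. The first step is to dispose of the finite case: if $V_i$ is finite, then finite-row independence with $U=V_i$ says exactly that $M_{i,j}$ has rank $|V_i|$, which is surjectivity. The converse is trivial in general, since surjectivity of $M_{i,j}$ implies surjectivity of $\pi_U\circ M_{i,j}$ for every $U$, because $\pi_U$ is onto. So only the implication ``finite-row independence $\Rightarrow$ surjective'' for countably infinite $V_i$ remains. In that case we enumerate $V_i=\{x_1,x_2,\dots\}$, identify $\mathbb R^{V_i}\cong\omega_{\mathbb R}$, and take $A=M_{i,j}$.

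Next we fix an increasing fundamental system of seminorms on $E$. If $V_j$ is infinite, enumerate $V_j=\{y_1,y_2,\dots\}$ and put $p_k=p_{K_k}$ with $K_k=\{y_1,\dots,y_k\}$ as in~\eqref{eq:seminorms}. If $V_j$ is finite, $E$ is finite-dimensional and condition~(2) holds automatically, since $E'$ itself is finite-dimensional. Condition~(1) of Lemma~\ref{lem:eidelheit_strict} is precisely finite-row independence: a nontrivial linear relation among the $A_x=\pi_x\circ M_{i,j}$ involves only finitely many indices $x$, say $x\in U$, and that contradicts the linear independence of the coordinate functionals over $U$.

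The remaining work, and the main obstacle, is condition~(2). A functional $\phi\in E'$ satisfies $|\phi|\le c\,p_k$ if and only if $\phi$, viewed as a finitely supported function, is supported in $K_k$. Indeed, if $\phi(y)\neq0$ for some $y\notin K_k$, then testing on $\delta_y$ gives $|\phi(\delta_y)|>0=c\,p_k(\delta_y)$. So the subspace in~(2) is contained in the space of functionals supported in $K_k$, which has dimension at most $k$. Its intersection with $\operatorname{span}\{A_x\}$ is therefore finite-dimensional, and no combinatorics is needed here. Note that each $A_x$ is the finitely supported row $y\mapsto\mathbf 1_{y\sim x}$, which has finite support by local finiteness, so it does lie in $E'$.

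Combining the three checks, Lemma~\ref{lem:eidelheit_strict} gives surjectivity of $M_{i,j}$ whenever finite-row independence holds. The only care needed is in matching the lemma's hypotheses: making sure the seminorm family is increasing and fundamental, and handling the finite-layer degenerate cases separately.
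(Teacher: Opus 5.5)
Your proof is correct and follows the paper's route: you identify $\mathbb R^{V_i}$ with $\omega_{\mathbb R}$ and use exhaustion seminorms on $\mathbb R^{V_j}$, so that condition~(2) of Lemma~\ref{lem:eidelheit_strict} holds automatically and condition~(1) is exactly finite-row independence. The differences are cosmetic. You get necessity directly from $\pi_U$ being onto, and you handle the finite-layer cases slightly differently (directly from the definition with $U=V_i$, and via Eidelheit when only $V_j$ is finite). Nothing is missing.
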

	
	\begin{proof}
		If $V_i$ is finite, local finiteness confines all nonzero entries in its finitely many rows to a finite set of columns, so this is the ordinary finite-dimensional rank criterion.

		Now suppose that $V_i$ is infinite. If $V_j$ is finite, neither condition can hold: for $|U|>|V_j|$, the operator $\pi_U\circ M_{i,j}$ cannot be surjective, and a finite-dimensional domain cannot map onto $\mathbb R^{V_i}$.
		
		It remains to consider the case in which both layers are infinite. Enumerate $V_i$ to identify $\mathbb R^{V_i}$ with $\omega_{\mathbb R}$, and choose an increasing finite exhaustion $K_k\nearrow V_j$. Equip $\mathbb R^{V_j}$ with the seminorms
		\[
			p_k(v):=\sum_{y\in K_k}|v(y)|.
		\]
		A functional bounded by $c p_k$ vanishes on every function supported outside $K_k$ and lies in the finite-dimensional space $\operatorname{span}\{\delta_y:y\in K_k\}$, where $\delta_y(v):=v(y)$. Condition~(2) of Lemma~\ref{lem:eidelheit_strict} is therefore automatic. The coordinate functionals of $M_{i,j}$ are its rows, and every relation among them involves only finitely many elements of $V_i$. Condition~(1) is precisely finite-row independence. Eidelheit's theorem gives the equivalence.
	\end{proof}
	\begin{remark}
		Kalmes~\cite{Kalmes2016} applies Eidelheit's theorem to the global Poisson operator; Koberstein and Schmidt~\cite{KobersteinSchmidt2020} express the finite-hopping criterion through transpose injectivity. Proposition~\ref{prop:eig_solvability} applies the same principle to rectangular interlayer coupling operators whose right-hand sides may depend nonlinearly on previously constructed layers.

		The same surjectivity can also be viewed through finite affine systems, as in the projective-limit approach of~\cite{Dodziuk2012}. For a fixed right-hand side and an increasing finite exhaustion of $V_i$, finite-row independence makes each finite system solvable on its finite set of active coordinates. At every fixed stage, the images of the later affine solution spaces form a decreasing sequence of nonempty affine subspaces and stabilize by finite dimensionality. The Mittag--Leffler condition then gives a compatible family of solutions; unused coordinates in $V_j$ can be chosen freely.
	\end{remark}

	\begin{definition}
		\label{def:global_coupling}
		A graph equipped with a chosen layered decomposition satisfies the \emph{global coupling condition} relative to this decomposition if $M_{n,n+1}$ has finite-row independence for every $n\geq0$. In the $\mathbb Z$-layered case, we additionally require finite-row independence of $M_{n,n-1}$ for every $n\leq-1$.
	\end{definition}

	\begin{proof}[Proof of Theorem~\ref{thm:mainSur}]
		For an $\mathbb N$-layered graph, prescribe $u_0\in\mathbb R^{V_0}$ and set $V_{-1}=\emptyset$. Given $u_{n-1}$ and $u_n$, Proposition~\ref{prop:eig_solvability} provides $u_{n+1}$ solving~\eqref{eq:forward_extension}. This enforces the equation on $V_n$ without altering the equations on earlier layers. Induction yields a global solution.

		For a $\mathbb Z$-layered graph, prescribe $(u_{-1},u_0)$. Surjectivity of the coupling operators in Definition~\ref{def:global_coupling} provides $u_1$ from the equation on $V_0$ and $u_{-2}$ from the equation on $V_{-1}$. Iterating the forward coupling operators for $n\geq0$ and the backward coupling operators for $n\leq-1$ extends the solution in both directions. In either case the prescribed data were arbitrary, so the relevant restriction map is surjective. Each target contains a copy of $\mathbb R$, and the solution set is infinite.
	\end{proof}
	\subsection{Finite Minors and Matchings}
	In vertex-indexed coordinates, the coupling operator $M_{i,j}$ has a possibly infinite row-finite $0$--$1$ matrix representation, also denoted by $M_{i,j}$. Its rows are indexed by $V_i$, its columns by $V_j$, and its entries are
	\[
	(M_{i,j})_{x,y} =
	\begin{cases}
		1, & \text{if } x \sim y, \\
		0, & \text{otherwise.}
	\end{cases}
	\]
	Under the layered partition, these matrices are the off-diagonal blocks of the block-tridiagonal adjacency operator. If $U\subset V_i$ is finite, local finiteness confines the nonzero columns of the rows indexed by $U$ to the finite set $N(U)\cap V_j$. After zero columns are removed, the row restriction $\pi_U\circ M_{i,j}$ is represented by the finite active matrix $M_{i,j}^U$, with rows indexed by $U$ and columns by $N(U)\cap V_j$. Finite-row independence is equivalent to full row rank of $M_{i,j}^U$ for every such $U$. Figure~\ref{fig:coupling_matrix} depicts this reduction.

	\begin{figure}[htbp]
		\centering
			\begin{tikzpicture}[>=stealth, node distance=1.5cm, scale=0.8]
			
			\node at (1.5, 3.5) {\small Layered Graph};
			
			\node[circle, draw, fill=gray!20, inner sep=2pt, label=left:{$x_1$}] (u1) at (0, 1.2) {};
			\node[circle, draw, fill=gray!20, inner sep=2pt, label=left:{$x_2$}] (u2) at (0, 0) {};
			\node[circle, draw, fill=gray!20, inner sep=2pt, label=left:{$x_3$}] (u3) at (0, -1.2) {};
			\draw[gray!50] (0, 2) -- (0, -2);
			\node at (0, -2.5) {\small $\vdots$};
			\node at (0, 2.5) {\small $\vdots$};
			\node at (0, -3.8) {\small $V_i$};
			
			\node[circle, draw, fill=gray!10, inner sep=2pt, label=right:{$y_1$}] (v1) at (3, 1.8) {};
			\node[circle, draw, fill=gray!10, inner sep=2pt, label=right:{$y_2$}] (v2) at (3, 0.6) {};
			\node[circle, draw, fill=gray!10, inner sep=2pt, label=right:{$y_3$}] (v3) at (3, -0.6) {};
			\node[circle, draw, fill=gray!10, inner sep=2pt, label=right:{$y_4$}] (v4) at (3, -1.8) {};
			\draw[gray!50] (3, 2.5) -- (3, -2.5);
			\node at (3, -3) {\small $\vdots$};
			\node at (3, 3) {\small $\vdots$};
			\node at (3, -3.8) {\small $V_j$};
			
			\draw[thick, gray!60] (u1) -- (v1);
			\draw[thick, gray!60] (u1) -- (v2);
			\draw[thick, gray!60] (u2) -- (v2);
			\draw[thick, gray!60] (u2) -- (v3);
			\draw[thick, gray!60] (u2) -- (v4);
			\draw[thick, gray!60] (u3) -- (v1);
			\draw[thick, gray!60] (u3) -- (v4);
			
			\draw[thick,gray!60,dashed] (v1) -- (1.5,2);
			\draw[thick,gray!60,dashed] (v4) -- (1.5,-2);
			
			\begin{scope}[shift={(8,0)}]
				\node at (1.5, 3.5) {\small Matrix of $M_{i,j}$};
				
				\node at (1.5, 0) {
					$\begin{pmatrix} 
						\ddots&*&*&*&*&\\
						&1 & 1 & 0 & 0 & \\ 
						&0 & 1 & 1 & 1 & \\ 
						&1 & 0 & 0 & 1 &\\
						&*&*&*&*&\ddots
					\end{pmatrix}$
				};
				
				\node[rotate=90, scale=0.8] at (-1.2, 0) {\small Rows in $U$};
				\node[scale=0.8] at (1.5, 1.8) {\small Cols in $N(U)\cap V_j$};
			\end{scope}
			
			\draw[->, thick] (3.8, 0) -- (5.8, 0);
			
		\end{tikzpicture}
		\caption{A finite row set $U\subset V_i$ and the corresponding finite submatrix of the matrix representing $M_{i,j}$.}
		\label{fig:coupling_matrix}
	\end{figure}

	\begin{proof}[Proof of Theorem~\ref{thm:mainFinite}]
		For a fixed finite $U\subset V_i$, the rows indexed by $U$ are independent if and only if the finite matrix $M_{i,j}^U$ has rank $|U|$. By the standard maximal-minor criterion, this is equivalent to the existence of $K\subset N(U)\cap V_j$ with $|K|=|U|$ and $\det M_{U,K}\neq0$. Requiring this for every finite $U$ proves the stated equivalence.
		If the induced graph between $U$ and $K$ has a unique perfect matching, exactly one term in the Leibniz expansion of $\det M_{U,K}$ is nonzero. Hence $\det M_{U,K}=\pm1$.
		
		For the final statement, assume that $N(\{x\})\cap V_j=\{\phi(x)\}$ for every $x\in V_i$, where $\phi:V_i\to V_j$ is injective. Then $(M_{i,j}v)(x)=v(\phi(x))$: the vertex injection and the coupling operator act in opposite directions. For a finite $U\subset V_i$, set $K=\phi(U)$. The induced bipartite graph on $U\sqcup K$ consists exactly of the disjoint edges $(x,\phi(x))$ and hence has a unique perfect matching. Finite-row independence follows.
	\end{proof}

	An injective choice of adjacent targets alone need not suffice: additional cross-layer edges can create perfect matchings whose determinant terms cancel. The final criterion in Theorem~\ref{thm:mainFinite} imposes the stronger singleton-neighbor condition
	\[
		N(\{x\})\cap V_{n+1}=\{\phi_n(x)\},
		\qquad \phi_n:V_n\to V_{n+1}\ \text{injective}.
	\]
	This condition guarantees forward extension, and the analogous condition toward $V_{n-1}$ guarantees backward extension.
	
	\section{Applications}
	\label{sec:applications}
	
	\subsection{Standard Examples}
	\label{subsec:standard_geometries}
	
	Integer lattices exhibit a stronger conclusion than mere surjectivity. For $\mathbb Z^d\cong\mathbb Z\times\mathbb Z^{d-1}$, every interlayer edge joins $(n,x')$ to $(n\pm1,x')$, so under the natural layer identifications each coupling operator $M_{n,n\pm1}$ is the identity.

	\begin{proposition}
		\label{prop:lattice_bijection}
		Let $d\geq1$ and $f:\mathbb Z^d\times\mathbb R\to\mathbb R$, and let
		\[
			\mathcal S_f:=\bigl\{u\in\mathbb R^{\mathbb Z^d}:\Delta u(x)=f(x,u(x))
			\text{ for every }x\in\mathbb Z^d\bigr\}
		\]
		be the solution set. For the decomposition $\mathbb Z^d\cong\mathbb Z\times\mathbb Z^{d-1}$ and any fixed $k\in\mathbb Z$, the restriction map
		\[ \mathcal{R}_k : \mathcal{S}_f \to \mathbb{R}^{V_k} \times \mathbb{R}^{V_{k+1}}, \quad u \mapsto (u|_{V_k}, u|_{V_{k+1}}) \]
		is a bijection.
	\end{proposition}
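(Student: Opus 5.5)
The plan is to use the fact that on $\mathbb Z^d$ every coupling operator is the identity, so the layer equation~\eqref{eq:layer_decomposition} can be solved \emph{explicitly and uniquely} for the next layer in either direction. Identify each layer $V_n=\{n\}\times\mathbb Z^{d-1}$ with $\mathbb Z^{d-1}$ via $(n,x')\mapsto x'$. Each $x=(n,x')$ has exactly one neighbor $(n+1,x')$ in $V_{n+1}$ and one neighbor $(n-1,x')$ in $V_{n-1}$. Hence $M_{n,n+1}=M_{n,n-1}=\mathrm{id}$ on $\mathbb R^{\mathbb Z^{d-1}}$, and $\Delta_n$ is the $(d-1)$-dimensional adjacency within the layer minus $2d$ times the identity. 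Equation~\eqref{eq:layer_decomposition} on $V_n$ then reads
\[
u_{n+1}=f(\cdot,u_n)-\Delta_n u_n-u_{n-1},\qquad u_{n-1}=f(\cdot,u_n)-\Delta_n u_n-u_{n+1},
\]
a second-order recursion in $n$, solvable forward and backward. For $d=1$ the layers are single points and $\Delta_n$ is multiplication by $-2$; the same formulas apply.

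\emph{Surjectivity.} Given $(a,b)\in\mathbb R^{V_k}\times\mathbb R^{V_{k+1}}$, set $u_k=a$ and $u_{k+1}=b$. For $n\ge k+1$, define $u_{n+1}$ by the forward formula using the equation on $V_n$. For $n\le k$, define $u_{n-1}$ by the backward formula using the equation on $V_n$. By induction every layer is defined. The equation on each $V_n$ with $n\ge k+1$ holds because $u_{n+1}$ was built from it, and the equation on each $V_n$ with $n\le k$ holds because $u_{n-1}$ was built from it. Since this covers every $n\in\mathbb Z$, $u\in\mathcal S_f$ and $\mathcal R_k u=(a,b)$. This is also an instance of Theorem~\ref{thm:mainSur}, via the singleton-neighbor criterion of Theorem~\ref{thm:mainFinite}, after shifting the index so the prescribed layers are $V_{-1},V_0$. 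The explicit formula, however, is needed for injectivity.

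\emph{Injectivity.} Suppose $u,w\in\mathcal S_f$ agree on $V_k\cup V_{k+1}$. The equation on $V_{k+1}$, rewritten with $M_{k+1,k+2}=\mathrm{id}$, expresses $u_{k+2}$ as a function of $u_k$ and $u_{k+1}$ only, so $u_{k+2}=w_{k+2}$. Forward induction then gives agreement on all $V_n$ with $n\ge k$. Symmetrically, the equation on $V_k$ determines $u_{k-1}$ from $u_k$ and $u_{k+1}$, and backward induction gives agreement for all $n\le k+1$. Hence $u=w$.

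There is no real obstacle here. The only points to check are bookkeeping: that the diagonal block uses the global degree $2d$ (this is harmless, since it only enters a known term), and that the index ranges in the two inductions together cover every layer equation exactly once in the existence part. Unlike the general case, no Eidelheit argument is needed, since the coupling operators are invertible rather than merely surjective.
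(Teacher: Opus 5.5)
Your proof is correct and follows the same approach as the paper. Because $M_{n,n\pm1}$ are identities, the equations on $V_{k+1}$ and $V_k$ uniquely determine $u_{k+2}$ and $u_{k-1}$, and iterating in both directions gives both existence and uniqueness. You simply write out the explicit recursion and the two inductions that the paper's three-line proof leaves implicit.
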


	\begin{proof}
		Given $(a,b)\in\mathbb R^{V_k}\times\mathbb R^{V_{k+1}}$, set $u_k=a$ and $u_{k+1}=b$. The equations on $V_{k+1}$ and $V_k$ determine $u_{k+2}$ and $u_{k-1}$, respectively. Iteration in both directions yields a unique global solution with the prescribed restrictions.
	\end{proof}

	\Needspace{12\baselineskip}
	\begin{proof}[Proof of Corollary~\ref{cor:exponential_lattice}]
		Proposition~\ref{prop:lattice_bijection} gives infinitely many solutions. Assume that a solution $u$ has finite energy:
		\[
			\sum_{x \in \mathbb{Z}^2} e^{u(x)} < \infty .
		\]
		Every superlevel set $\{x\in\mathbb Z^2:u(x)\geq M\}$ is finite. For any $x_*\in\mathbb Z^2$, the finite nonempty set $\{x:u(x)\geq u(x_*)\}$ therefore contains a global maximizer $x_0$. Hence
		\[
			\Delta u(x_0)=\sum_{y\sim x_0}\bigl(u(y)-u(x_0)\bigr)\leq0,
		\]
		contrary to $\Delta u(x_0)=e^{u(x_0)}>0$.
	\end{proof}

	\begin{example}[Antitrees]
		Consider an $\mathbb N$-layered graph with finite nonempty layers in which every vertex of $V_n$ is adjacent to every vertex of $V_{n+1}$. If $|V_n|\geq2$, all rows of $M_{n,n+1}$ are identical. Its coupling matrix has rank one, so the operator fails finite-row independence.
	\end{example}
	
	\begin{example}[Leafless trees]
		Let $\mathbb{T}$ be a locally finite infinite tree satisfying $\deg(x)\geq2$ for every vertex $x$. Choose a root and let $V_n$ be the set of vertices of depth $n$. Every vertex has at least one child, and every child has a unique parent.

		For finite $U\subset V_n$, choose one child $y_x$ of each $x\in U$. The induced bipartite graph on $U\sqcup\{y_x:x\in U\}$ consists of the edges $(x,y_x)$. Hence $M_{n,n+1}$ has finite-row independence.
	\end{example}
	
	\begin{example}[Planar periodic graphs]
		The following planar periodic graphs also satisfy the global coupling condition.
		\begin{itemize}
			\item \textit{The hexagonal graph.} Embed the infinite hexagonal graph into $\mathbb{Z}^2$ via the brick-wall model, with horizontal edges $(x,y)\sim(x+1,y)$ and vertical edges $(x,y)\sim(x,y+1)$ whenever $x+y$ is even. Define $V_n=\{(n,y):y\in\mathbb Z\}$. The vertical edges remain within $V_n$; the horizontal edges identify $V_n$ bijectively with both $V_{n-1}$ and $V_{n+1}$. Theorem~\ref{thm:mainFinite} applies in both directions.
			
			\item \textit{The triangular lattice.} Use the model with vertex set $\mathbb Z^2$ in which two vertices are adjacent when their difference is one of $\pm(1,0)$, $\pm(0,1)$, or $\pm(1,1)$, and take the layers $V_n=\{(n,y):y\in\mathbb Z\}$. For finite $U=\{(n,y_1),\ldots,(n,y_m)\}$ with $y_1<\cdots<y_m$, take $K=U+(1,0)$. In these orderings the restricted adjacency matrix is upper triangular with diagonal entries one, hence has a unique perfect matching. The choice $K=U-(1,0)$ gives the backward condition.
		\end{itemize}
	\end{example}
	These four graphs are shown in Figure~\ref{fig:standard_geometries}.

	\begin{figure}[htbp]
		\centering
		\begin{minipage}[t]{0.24\linewidth}
			\centering
			\begin{tikzpicture}[scale=0.7]
				\clip (-2.3, -2.3) rectangle (2.3, 2.3);
				\draw[thick] (0,0) -- (-1.2,-1) -- (-1.8,-2) -- (-2.1,-3);
				\draw[thick] (-1.8,-2) -- (-1.5,-3);
				\draw[thick] (-1.2,-1) -- (-0.6,-2) -- (-0.9,-3);
				\draw[thick] (-0.6,-2) -- (-0.3,-3);
				\draw[thick] (0,0) -- (1.2,-1) -- (0.6,-2) -- (0.3,-3);
				\draw[thick] (0.6,-2) -- (0.9,-3);
				\draw[thick] (1.2,-1) -- (1.8,-2) -- (1.5,-3);
				\draw[thick] (1.8,-2) -- (2.1,-3);
				\fill (0,0) circle (2pt);
				\fill (-1.2,-1) circle (2pt);
				\fill (1.2,-1) circle (2pt);
				\fill (-1.8,-2) circle (2pt);
				\fill (-0.6,-2) circle (2pt);
				\fill (0.6,-2) circle (2pt);
				\fill (1.8,-2) circle (2pt);
			\end{tikzpicture}
			\\[0.5em] (a) Leafless tree $\mathbb T$
		\end{minipage}\hfill
		\begin{minipage}[t]{0.24\linewidth}
			\centering
			\begin{tikzpicture}[scale=0.7]
				\clip (-2.3, -2.3) rectangle (2.3, 2.3);
				\foreach \i in {-3,...,3} {
					\draw[thick] (\i, -3) -- (\i, 3);
					\draw[thick] (-3, \i) -- (3, \i);
				}
				\foreach \x in {-2,...,2} {
					\foreach \y in {-2,...,2} {
						\fill (\x, \y) circle (2pt);
					}
				}
			\end{tikzpicture}
			\\[0.5em] (b) Integer lattice $\mathbb Z^2$
		\end{minipage}\hfill
		\begin{minipage}[t]{0.24\linewidth}
			\centering
			\begin{tikzpicture}[scale=0.7]
				\clip (-2.3, -2.3) rectangle (2.3, 2.3);
				\foreach \y in {-3,...,3} {
					\draw[thick] (-3, \y) -- (3, \y);
				}
				\foreach \x in {-3,...,3} {
					\foreach \y in {-3,...,3} {
						\pgfmathtruncatemacro{\sumxy}{\x+\y}
						\ifodd\sumxy\else
						\draw[thick] (\x, \y) -- (\x, \y+1);
						\fi
					}
				}
				\foreach \x in {-2,...,2} {
					\foreach \y in {-2,...,2} {
						\fill (\x, \y) circle (2pt);
					}
				}
			\end{tikzpicture}
			\\[0.5em] (c) Hexagonal graph
		\end{minipage}\hfill
		\begin{minipage}[t]{0.24\linewidth}
			\centering
			\begin{tikzpicture}[scale=0.7]
				\clip (-2.3, -2.3) rectangle (2.3, 2.3);
				\foreach \i in {-3,...,3} {
					\draw[thick] (\i, -3) -- (\i, 3);
					\draw[thick] (-3, \i) -- (3, \i);
				}
				\foreach \i in {-6,...,3} {
					\draw[thick] (\i, -3) -- (\i+6, 3);
				}
				\foreach \x in {-2,...,2} {
					\foreach \y in {-2,...,2} {
						\fill (\x, \y) circle (2pt);
					}
				}
			\end{tikzpicture}
			\\[0.5em] (d) Triangular lattice
		\end{minipage}
			\caption{Graph models for a leafless tree, the integer lattice, the hexagonal graph, and the triangular lattice.}
		\label{fig:standard_geometries}
	\end{figure}
	\FloatBarrier
	
		\Needspace{8\baselineskip}
		\begin{example}[Discrete Heisenberg group]
			\label{ex:heisenberg_coupling}
			For the $y$-layering fixed in Example~\ref{ex:layered_graphs}, $Y^{\pm1}$ are the only generators joining distinct layers. Right multiplication gives bijections
			\[
				\phi_n^+(x,n,z)=(x,n+1,z+x),
				\qquad
				\phi_n^-(x,n,z)=(x,n-1,z-x)
			\]
			from $V_n$ onto $V_{n+1}$ and $V_{n-1}$, respectively. The unique-neighbor criterion in Theorem~\ref{thm:mainFinite} therefore applies in both directions. Figure~\ref{fig:heisenberg_shear} illustrates the resulting shear.
		\end{example}

		\begin{figure}[!htbp]
			\centering
			\begin{tikzpicture}[x={(0.6cm, -0.4cm)}, y={(3.6cm, 0cm)}, z={(0cm, 0.9cm)}, scale=0.85, >=stealth]
				
				\tikzset{
					layer plane/.style={fill=gray!5, draw=gray!30, opacity=0.8},
					grid line/.style={gray!30, very thin},
					grid dot/.style={circle, fill=gray!30, inner sep=0.8pt},
					active edge/.style={->, thick, shorten >= 1.5pt, shorten <= 1.5pt},
					node blue/.style={circle, fill=blue!70, inner sep=1.5pt},
					node dark/.style={circle, fill=gray!80!black, inner sep=1.5pt},
					node red/.style={circle, fill=red!70, inner sep=1.5pt},
					edge blue/.style={active edge, blue!70},
					edge dark/.style={active edge, gray!80!black},
					edge red/.style={active edge, red!70}
				}
				
				\foreach \layer in {0,1,2} {
					\draw[layer plane] (-1.8, \layer, -4) -- (1.8, \layer, -4) -- (1.8, \layer, 4) -- (-1.8, \layer, 4) -- cycle;
					
					\node[text=gray!80, anchor=south, font=\footnotesize] at (0, \layer, 4.5) {$V_{\layer}$ ($y=\layer$)};
					
					\foreach \x in {-1,0,1} {
						\draw[grid line] (\x, \layer, -3.5) -- (\x, \layer, 3.5);
					}
					\foreach \z in {-3,-2,-1,0,1,2,3} {
						\draw[grid line] (-1.5, \layer, \z) -- (1.5, \layer, \z);
					}
					\foreach \x in {-1,0,1} {
						\foreach \z in {-3,-2,-1,0,1,2,3} {
							\node[grid dot] at (\x, \layer, \z) {};
						}
					}
				}
				
				\draw[thick, orange, rounded corners=2pt, dashed] 
				(-1.2, 0, -1.2) -- (1.2, 0, -1.2) -- (1.2, 0, 1.2) -- (-1.2, 0, 1.2) -- cycle;
				\node[text=orange, anchor=north, font=\footnotesize] at (0, 0, -1.8) {Subset $U$};
				
				\draw[thick, orange, rounded corners=2pt, dashed] 
				(-1.2, 1, -2.4) -- (1.2, 1, 0) -- (1.2, 1, 2.4) -- (-1.2, 1, 0) -- cycle;
				\node[text=orange, anchor=north, font=\footnotesize] at (0, 1, -2.2) {Subset $K = UY$};
				
				\draw[thick, orange!50, rounded corners=2pt, dashed] 
				(-1.2, 2, -3.6) -- (1.2, 2, 1.2) -- (1.2, 2, 3.6) -- (-1.2, 2, -1.2) -- cycle;
				
				\foreach \zbase in {-1,0,1} {
					\node[node blue] (b0) at (-1, 0, \zbase) {};
					\node[node blue] (b1) at (-1, 1, \zbase-1) {};
					\node[node blue] (b2) at (-1, 2, \zbase-2) {};
					\draw[edge blue] (b0) -- (b1);
					\draw[edge blue] (b1) -- (b2);
					
					\node[node dark] (d0) at (0, 0, \zbase) {};
					\node[node dark] (d1) at (0, 1, \zbase) {};
					\node[node dark] (d2) at (0, 2, \zbase) {};
					\draw[edge dark] (d0) -- (d1);
					\draw[edge dark] (d1) -- (d2);
					
					\node[node red] (r0) at (1, 0, \zbase) {};
					\node[node red] (r1) at (1, 1, \zbase+1) {};
					\node[node red] (r2) at (1, 2, \zbase+2) {};
					\draw[edge red] (r0) -- (r1);
					\draw[edge red] (r1) -- (r2);
				}
				
				\node[fill=white, draw=gray!40, rounded corners, align=left, font=\scriptsize] 
				at (-1.8, 0, 5) [anchor=south west] {
					\textbf{Right multiplication by $Y$}\\
					$(x,n,z) \xrightarrow{\;Y\;} (x, n+1, z+x)$ \\[1.5mm]
					\textcolor{red!70}{\rule{4mm}{1.5mm}} $\ x=1 \implies z \mapsto z+1$ (Upward shear)\\
					\textcolor{gray!80!black}{\rule{4mm}{1.5mm}} $\ x=0 \implies z \mapsto z$ (No shear)\\
					\textcolor{blue!70}{\rule{4mm}{1.5mm}} $\ x=-1 \implies z \mapsto z-1$ (Downward shear)
				};
				
				\begin{scope}[shift={(-1.8, 2.3, 5)}]
					\draw[->, thick, gray!80!black] (0,0,0) -- (1,0,0) node[anchor=north east, font=\scriptsize] {$x$};
					\draw[->, thick, gray!80!black] (0,0,0) -- (0,0.3,0) node[anchor=west, font=\scriptsize] {$y$};
					\draw[->, thick, gray!80!black] (0,0,0) -- (0,0,1) node[anchor=south, font=\scriptsize] {$z$};
				\end{scope}
				
			\end{tikzpicture}
			
			\caption{The $y$-layering of $\mathcal H_3(\mathbb Z)$. Right multiplication by $Y$ sends $(x,n,z)$ to $(x,n+1,z+x)$ and maps $U\subset V_0$ bijectively onto $UY\subset V_1$, realizing the unique-neighbor criterion of Theorem~\ref{thm:mainFinite}.}
			\label{fig:heisenberg_shear}
		\end{figure}
		\FloatBarrier
		\Needspace{10\baselineskip}
	\subsection{Semidirect-Product Cayley Graphs}
	\label{subsec:semidirect_products}
	
	We now pass from individual examples to Cayley graphs whose layer structure is induced by a homomorphism onto $\mathbb Z$, and study the coupling condition through the structure of the base group.
	
	Let $G$ be a group equipped with a finite symmetric generating set $S=S^{-1}$, with $e\notin S$, and suppose that there is a surjective homomorphism $\psi:G\to\mathbb{Z}$ such that
	\[
		\psi(S)\subset\{-1,0,1\}.
	\]
	The Cayley graph $\Gamma(G,S)$ has vertex set $G$ and unordered edges $\{g,gs\}$ for $g\in G$ and $s\in S$.
	
	The fibers $V_n=\psi^{-1}(n)$ form a $\mathbb Z$-layered decomposition. With $H=\ker\psi$, surjectivity gives the short exact sequence
	\[
		1 \to H \to G \xrightarrow{\psi} \mathbb{Z} \to 1
	\]
	and identifies $H$ with the base layer $V_0$.
	
	Because $\mathbb{Z}$ is free, the exact sequence splits. Choose $t\in G$ with $\psi(t)=1$; then $n\mapsto t^n$ defines a splitting. Conjugation by $t$ induces $\theta\in\operatorname{Aut}(H)$ through $\theta(h)=t^{-1}ht$, so that $ht=t\theta(h)$. Every $g\in G$ has a unique factorization $g=t^nh$, and in the corresponding coordinates $G\cong\mathbb Z\ltimes_\theta H$ with
	\[
		(n_1,h_1)(n_2,h_2)
		=(n_1+n_2,\theta^{n_2}(h_1)h_2).
	\]
	In particular,
	\[
		(0,h)(1,h_s)=(1,\theta(h)h_s).
	\]
	This convention is adapted to right-multiplicative adjacency. Left translations preserve the right Cayley graph and identify the coordinate matrices of the interlayer coupling operators used below.
	
	\begin{example}
		For the Heisenberg group above, $\psi(x,y,z)=y$, $t=Y$, and $H=\ker\psi=\langle X,Z\rangle\cong\mathbb Z^2$. The induced automorphism is $\theta(x,z)=(x,z+x)$, which is the semidirect-product form of the shear in Figure~\ref{fig:heisenberg_shear}.
	\end{example}
	
	Let $S_+ = S \cap \psi^{-1}(1)$ and $S_-=S\cap\psi^{-1}(-1)$. Since $S$ generates $G$, both are nonempty finite sets. If $S_+=\{s_+\}$, we may choose the splitting element $t=s_+$. Then right multiplication by $s_+$ is a bijection from $V_n$ onto $V_{n+1}$, and its inverse is right multiplication by $s_+^{-1}\in S_-$. Hence the unique-neighbor criterion in Theorem~\ref{thm:mainFinite} holds in both directions. When $|S_+|\geq2$, the determinant is governed by the structure of the base group $H$.
		
	\subsubsection{Finite Base Groups}
	Let $|H|=N<\infty$. Each layer $V_n$ has cardinality $N$, and right multiplication by $S_+$ determines the $N\times N$ coupling matrix $M_{n,n+1}$. Fix an ordering of $H$ and transport it to $V_n$ through $h\mapsto t^nh$. Left translation by $t^{-n}$ then identifies $M_{n,n+1}$ with $M_{0,1}$.
	
	\begin{theorem}
		\label{thm:finite_base}
		Let $G \cong \mathbb{Z} \ltimes_\theta H$, where the base group $H$ is finite of order $N$, and let $S=S^{-1}$ be a finite generating set satisfying $\psi(S)\subset\{-1,0,1\}$. Let $S_+ = S \cap \psi^{-1}(1)$ denote the set of forward generators, and let $M_{0,1}$ be the $N \times N$ coupling matrix between $V_0$ and $V_1$ induced by right multiplication by $S_+$. Then the Cayley graph $\Gamma(G, S)$ satisfies the global coupling condition if and only if $\det(M_{0,1}) \neq 0$.
	\end{theorem}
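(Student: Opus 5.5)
Since every layer is finite of size $N$, finite-row independence of a coupling operator $M_{i,j}$ reduces to an ordinary rank condition on an $N\times N$ matrix. The plan is to check that all forward operators $M_{n,n+1}$ ($n\ge 0$) and all backward operators $M_{n,n-1}$ ($n\le -1$) are, up to permutation of rows and columns, the same matrix $M_{0,1}$ or its transpose. The determinant condition then controls everything at once.

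First, by Proposition~\ref{prop:eig_solvability} (or Theorem~\ref{thm:mainFinite} with $U=V_i$), finite-row independence of an $N\times N$ coupling operator is equivalent to its matrix having rank $N$, i.e.\ nonzero determinant. For the forward operators, left translation $L_{t^{-n}}:g\mapsto t^{-n}g$ is a graph automorphism of $\Gamma(G,S)$, since adjacency is defined by right multiplication. It maps $V_n$ onto $V_0$ and $V_{n+1}$ onto $V_1$, because $\psi(t^{-n}g)=\psi(g)-n$. Hence $x\sim y$ for $x\in V_n$, $y\in V_{n+1}$ if and only if $t^{-n}x\sim t^{-n}y$. With the transported orderings $h\mapsto t^nh$, this gives $M_{n,n+1}=M_{0,1}$ as matrices, so $\det M_{n,n+1}=\det M_{0,1}$ for every $n$.

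For the backward operators, symmetry of the adjacency relation gives $(M_{n,n-1})_{x,y}=(M_{n-1,n})_{y,x}$, so $M_{n,n-1}=M_{n-1,n}^{\mathsf T}$. By the previous step, $M_{n-1,n}=M_{0,1}$ after reordering, so $\det M_{n,n-1}=\pm\det M_{0,1}$. If the orderings of $V_n$ and $V_{n-1}$ are fixed as above, the sign is $+$; in any case only nonvanishing matters.

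Putting these together: if $\det M_{0,1}\ne0$, then every $M_{n,n+1}$ with $n\ge0$ and every $M_{n,n-1}$ with $n\le-1$ has nonzero determinant. Each is therefore finite-row independent, and the global coupling condition of Definition~\ref{def:global_coupling} holds. Conversely, the global coupling condition requires in particular that $M_{0,1}$ be finite-row independent. Taking $U=V_0$, this means rank $N$, i.e.\ $\det M_{0,1}\ne0$.

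The main point needing care is the identification of $M_{n,n+1}$ with $M_{0,1}$ via left translation. One must check that the orderings are transported consistently: $t^nh\mapsto h$ on $V_n$ and $t^{n+1}h\mapsto th$ on $V_{n+1}$, the latter matching the chosen ordering of $V_1$. Since a determinant changes at most by sign under row and column permutations, even a mismatch in orderings would not affect the conclusion. The remaining steps are routine.
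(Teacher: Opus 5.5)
Your proof is correct and follows the same route as the paper. You reduce finite-row independence on the $N$-vertex layers to invertibility of square matrices, identify every forward matrix with $M_{0,1}$ via left translation by powers of $t$, and handle the backward matrices through $M_{n,n-1}=M_{n-1,n}^{T}$. Your check that the transported orderings $t^nh\mapsto h$ and $t^{n+1}h\mapsto th$ agree is a small addition. It makes the identification exact, whereas the paper states it only up to row and column permutations.
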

	
	\begin{proof}
		Every layer has $N$ vertices, so finite-row independence is equivalent to invertibility of the corresponding $N\times N$ matrix. The left-translation identification above shows that every $M_{n,n+1}$ is obtained from $M_{0,1}$ by row and column permutations. Since the graph is undirected,
		\[
			M_{n,n-1}=M_{n-1,n}^{T}.
		\]
		Consequently, $\det M_{0,1}\neq0$ makes all forward and backward coupling matrices invertible. Conversely, the global coupling condition gives finite-row independence of the operator $M_{0,1}$, so its square coupling matrix has full rank and hence nonzero determinant.
	\end{proof}
	
	For example, let $G=\mathbb Z\times\mathbb Z_N$ with $N\geq2$ and forward generators $S_+=\{(1,0),(1,1)\}$. These generators are distinct, and the base coupling matrix is $M_{0,1}=I_N+P_N$, where $P_N$ is the permutation matrix of the $N$-cycle induced by $(1,1)$. Hence
	\[
		\det(M_{0,1})=1+(-1)^{N-1}.
	\]
	Thus the global coupling condition holds when $N$ is odd and fails when $N$ is even. When $N=1$, the two displayed generators coincide in the simple Cayley graph and $M_{0,1}=[1]$, so this degenerate case is excluded from the formula above.
	
	\begin{example}
		For the equation $\Delta u + 4u = f$ on $\mathbb{Z} \times \mathbb{Z}_4$ with $S=\{(1,0),(-1,0),(1,1),(-1,-1)\}$, where the second coordinate is taken modulo $4$, one has
		\[
			(\Delta+4)u(n,k)
			=u(n+1,k)+u(n-1,k)+u(n+1,k+1)+u(n-1,k-1).
		\]
		Summing this identity first over $k\in\{0,2\}$ and then over $k\in\{1,3\}$ yields
		\[
			f(0,0) + f(0,2) = \sum_{k=0}^3 \bigl(u(1,k) + u(-1,k)\bigr) = f(0,1) + f(0,3).
		\]
		Thus the equation has no global solution when $f(0,0)+f(0,2)\neq f(0,1)+f(0,3)$. Figure~\ref{fig:cayley_Z_Z4} displays the alternating interlayer pattern behind this compatibility condition. Failure of the global coupling condition may therefore obstruct even unrestricted linear solvability for suitable source terms. It does not imply nonsolvability for arbitrary nonlinear equations on the graph.
	\end{example}

		\begin{figure}[htbp]
			\centering
			\begin{tikzpicture}[thick, vnode/.style={circle, draw, fill=white, minimum size=1.1cm, inner sep=0pt}]
				\foreach \n in {-2, -1, 0, 1, 2} {
					\foreach \h in {0, 1, 2, 3} {
						\coordinate (v\n\h) at (3*\n + 3, 2*\h);
					}
				}
				\begin{scope}
					\clip (-1, 0) rectangle (7, 6);
					\foreach \n in {-2, -1, 0, 1} {
						\pgfmathtruncatemacro{\nextn}{\n + 1}
						\foreach \h in {0, 1, 2, 3} {
							\draw (v\n\h) -- (v\nextn\h);
							\pgfmathtruncatemacro{\nexth}{mod(\h + 1, 4)}
							\ifnum\h=3
							\draw (v\n\h) to[out=0, in=180] (v\nextn\nexth);
							\else
							\draw (v\n\h) -- (v\nextn\nexth);
							\fi
						}
					}
				\end{scope}
				\foreach \n in {-1, 0, 1} {
					\foreach \h in {0, 1, 2, 3} {
						\node[vnode] at (v\n\h) {$(\n, {\h})$};
					}
					\node[below] at (3*\n + 3, -1) {$V_{\n}$};
				}
				\foreach \h in {0, 1, 2, 3} {
					\node[fill=white, inner sep=2pt] at (-1.5, 2*\h) {$\cdots$};
					\node[fill=white, inner sep=2pt] at (7.5, 2*\h) {$\cdots$};
				}
			\end{tikzpicture}
			\caption{Local structure of the Cayley graph of $\mathbb Z\times\mathbb Z_4$. The two forward neighbors of each vertex make the singular interlayer coupling matrix and the alternating compatibility obstruction visible.}
			\label{fig:cayley_Z_Z4}
		\end{figure}
	\FloatBarrier
	
	\subsubsection{Left-Orderable Base Groups}
	Assume the base group $H$ is left-orderable, admitting a strict total order $\prec$ invariant under left translation: $a \prec b$ implies $c a \prec c b$ for all $a,b,c\in H$. We write $\preceq$ for the associated non-strict order.
	
	\begin{theorem} \label{thm:left_orderable_surjectivity}
		Let $G \cong \mathbb{Z} \ltimes_\theta H$, and let $S=S^{-1}$ be a finite generating set satisfying $\psi(S)\subset\{-1,0,1\}$. If $H$ is left-orderable, then the Cayley graph $\Gamma(G, S)$ satisfies the global coupling condition. Consequently, for every function $f:G\times\mathbb R\to\mathbb R$, the equation $\Delta u(x) = f(x,u(x))$ admits infinitely many solutions.
	\end{theorem}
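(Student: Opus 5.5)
The plan is to verify finite-row independence of every forward operator $M_{n,n+1}$ and every backward operator $M_{n,n-1}$ directly, via a leading-term argument in the left order, and then to invoke Theorem~\ref{thm:mainSur}. The key observation is that, in the coordinates $g=t^nh$, each row of a coupling matrix is the indicator function of a left translate of one fixed finite subset of $H$.

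\textbf{Step 1: forward rows are left translates.} For each $s\in S_+$ write $s=t\,h_s$ with $h_s\in H$, and put $A=\{h_s:s\in S_+\}\subset H$, a finite nonempty set. For a vertex $x=t^nh\in V_n$, the relation $ht=t\theta(h)$ gives
\[
xs=t^nht h_s=t^{n+1}\theta(h)h_s .
\]
Identify $V_{n+1}$ with $H$ via $t^{n+1}k\mapsto k$. Then the row of $M_{n,n+1}$ indexed by $x$ is the indicator function of $\theta(h)A$. Since the graph is simple, entries are $0$--$1$, and distinct $s$ give distinct $h_s$, so no multiplicities arise.

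\textbf{Step 1': backward rows are left translates.} Since $S_-=S_+^{-1}$, the neighbors of $x$ in $V_{n-1}$ are $xs^{-1}=t^nh\,h_s^{-1}t^{-1}$. From $kt^{-1}=t^{-1}\theta^{-1}(k)$ this equals $t^{n-1}\theta^{-1}(h)\,\theta^{-1}(h_s^{-1})$. So the backward rows are indicator functions of left translates $\theta^{-1}(h)B$, where $B=\theta^{-1}(A^{-1})$ is again finite. In both directions the maps $h\mapsto\theta^{\pm1}(h)$ are injective, so distinct rows correspond to distinct translating elements $w\in H$.

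\textbf{Step 2: key lemma.} The lemma to prove is: for any finite nonempty $A\subset H$ and any finite $W\subset H$, the indicator functions $\mathbf 1_{wA}$, $w\in W$, are linearly independent. Suppose $\sum_{w\in W}c_w\mathbf 1_{wA}=0$ with not all $c_w=0$. Let $a^*=\max_\prec A$. By left invariance, $wa^*$ is the $\prec$-maximum of $wA$. Among the $w$ with $c_w\neq0$, choose $w_0$ so that $w_0a^*$ is $\prec$-maximal.

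Suppose $w_0a^*\in wA$ for some other $w$ with $c_w\neq0$, say $w_0a^*=wa$ with $a\preceq a^*$. Then $wa^*\succeq wa=w_0a^*$, so maximality forces $wa^*=w_0a^*$, hence $w=w_0$. Evaluating the relation at $w_0a^*$ therefore gives $c_{w_0}=0$, a contradiction.

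Equivalently, in the spirit of Theorem~\ref{thm:mainFinite}: take $K=\{wa^*:w\in W\}$ and order $W$ by $\prec$ on $wa^*$. The argument above shows that $wa^*\in w'A$ forces $w'a^*\succeq wa^*$, so $M_{U,K}$ is triangular with unit diagonal and has determinant $\pm1$.

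\textbf{Step 3: conclusion.} Applying the lemma with the set $A$ for the forward operators and with $B$ for the backward operators gives finite-row independence of $M_{n,n+1}$ for every $n\ge0$ and of $M_{n,n-1}$ for every $n\le-1$; in fact it holds for all $n$. This is the global coupling condition of Definition~\ref{def:global_coupling}, and Theorem~\ref{thm:mainSur} yields infinitely many solutions for every $f$.

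The main obstacle is Step 1/1': getting the semidirect-product bookkeeping right so that the rows are \emph{left} translates of a fixed set, rather than right translates. Only left invariance of $\prec$ is assumed, so this is exactly what makes the maximum-element argument in Step 2 work. The computation $xs=t^{n+1}\theta(h)h_s$ is where this must be checked carefully.
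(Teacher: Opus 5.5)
Your proof is correct and follows essentially the same route as the paper, which also takes the $\prec$-maximal forward $H$-coordinate $h_*$, sets $K=Us_*$, shows that the restricted matrix is lower triangular with unit diagonal, and handles the backward direction by the same argument with $\theta^{-1}$ in place of $\theta$. The only differences are presentational: you phrase the argument as a leading-term lemma about indicator functions of left translates $wA$, and you compute every layer directly from $xs=t^{n+1}\theta(h)h_s$, whereas the paper transports the $V_0$--$V_{\pm1}$ case to the other layers by left translation by powers of $t$.
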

	
	\begin{proof}
		Write $s=(1,h_s)$ for $s\in S_+$, and set
		\[
			h_*:=\max\{h_s:s\in S_+\},
			\qquad s_*:=(1,h_*).
		\]
		
		For $v=(0,h)\in V_0$, right multiplication by $s\in S_+$ gives
		\[
			vs=(0,h)(1,h_s)=(1,\theta(h)h_s).
		\]
		The left-invariance of $\prec$ implies $\theta(h) h_s \prec \theta(h) h_*$ for all $s \in S_+ \setminus \{s_*\}$. 
		
		Let $U=\{u_1,\ldots,u_m\}\subset V_0$ be finite, write $u_i=(0,h_i)$, and put $K=Us_*$. Right multiplication by $s_*$ is bijective, so $|K|=|U|$. Define
		\[
			z_i:=\theta(h_i)h_*,
		\]
		the $H$-coordinate of $u_is_*$. Index the pairs $(u_i,u_is_*)$ so that $z_1\prec\cdots\prec z_m$. If the entry in row $i$ and column $j$ is nonzero, then $u_is=u_js_*$ for some $s=(1,h_s)\in S_+$, and therefore
		\[
			z_j=\theta(h_i)h_s\preceq\theta(h_i)h_*=z_i.
		\]
		Hence $j\leq i$, and the restricted adjacency matrix is lower triangular with unit diagonal, supplied by $s_*$. Its determinant is $1$.
		
		By Theorem~\ref{thm:mainFinite}, $M_{0,1}$ satisfies the finite-row independence condition. Write every $s\in S_-$ as $s=(-1,h_s)$ and let $h_-^*$ be the maximum of the finite set of its $H$-coordinates. Since
		\[
			(0,h)(-1,h_s)=(-1,\theta^{-1}(h)h_s),
		\]
		the same triangular argument, with $\theta$ replaced by $\theta^{-1}$, proves finite-row independence of $M_{0,-1}$. Left translations by powers of $t$ carry these conclusions to every layer. The global coupling condition follows, and Theorem~\ref{thm:mainSur} gives infinitely many solutions.
	\end{proof}
	
	\begin{example}
		For $G = \mathbb{Z}^d \cong \mathbb{Z} \times \mathbb{Z}^{d-1}$, the base group $\mathbb Z^{d-1}$ is left-orderable, for instance by the lexicographic order. Theorem~\ref{thm:left_orderable_surjectivity} therefore applies to every finite symmetric generating set $S$ satisfying $\psi(S)\subset\{-1,0,1\}$ for the first-coordinate homomorphism $\psi$, not only to the standard orthogonal and triangular choices considered above.
	\end{example}
	
	\subsubsection{Split Mixed Base Groups}
	Assume that $H$ fits into a split exact sequence
	\[
		1\longrightarrow F\longrightarrow H\xrightarrow{q}T\longrightarrow1,
	\]
	where $F$ is finite of order $N$ and $T$ is left-orderable with order $\prec$. Fix a splitting $\sigma:T\to H$ and write each $h\in H$ uniquely as $h=\sigma(x)y$, with $(x,y)\in T\times F$. Thus $H\cong T\ltimes F$, with the right-hand factor normal; explicitly,
	\[
		(\sigma(x)y)(\sigma(x')y')
		=\sigma(xx')\bigl(\sigma(x')^{-1}y\sigma(x')\bigr)y'.
	\]
	
	Since $T$ is left-orderable, it is torsion-free, so $F$ is exactly the set of torsion elements of $H$. Automorphisms preserve element orders; hence $F$ is characteristic in $H$, and therefore $\theta(F)=F$. Thus $\theta$ induces $\bar\theta\in\operatorname{Aut}(T)$ satisfying $q\circ\theta=\bar\theta\circ q$. For $x\in T$ and $n\in\mathbb Z$, write
	\[
		\mathcal F_x^{(n)}:=\{(n,\sigma(x)y):y\in F\}\subset V_n
	\]
	for the corresponding $F$-fiber in the $n$th layer.
	
	For the forward direction, write each $s\in S_+=S\cap\psi^{-1}(1)$ uniquely as $s=(1,h_s)$, where $h_s=\sigma(x_s)y_s$, and let
	\[
		x_*:=\max\{x_s:s\in S_+\},
		\qquad S_*:=\{s\in S_+:x_s=x_*\}.
	\]
	Every $s\in S_*$ maps $\mathcal F_e^{(0)}$ bijectively onto $\mathcal F_{x_*}^{(1)}$ by right multiplication. With arbitrary orderings of these fibers, let $D_F$ be the resulting $N\times N$ adjacency matrix generated by $S_*$, called the \emph{extremal $F$-fiber block}. Its nonsingularity is independent of the chosen orderings.
	
	\begin{theorem} \label{thm:mixed_base_surjectivity}
		Let $G\cong\mathbb Z\ltimes_\theta H$ and let $S=S^{-1}$ be a finite generating set satisfying $\psi(S)\subset\{-1,0,1\}$. Suppose that $H$ has the split form $H\cong T\ltimes F$ described above, where $F$ is finite and $T$ is left-orderable. If the extremal $F$-fiber block is nonsingular,
		\[
			\det D_F\neq0,
		\]
		then $\Gamma(G,S)$ satisfies the global coupling condition.
	\end{theorem}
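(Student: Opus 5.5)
The proof combines the triangular argument of Theorem~\ref{thm:left_orderable_surjectivity} with the finite determinant criterion of Theorem~\ref{thm:finite_base}, working at the level of $F$-fibers. By Theorem~\ref{thm:mainFinite}, it suffices to show that for every finite $U\subset V_0$ there is a $K\subset V_1$ with $|K|=|U|$ and $\det M_{U,K}\neq0$. Left translations by powers of $t$ then transport this to every layer, and the backward direction follows by the same argument with $\theta^{-1}$, as before.

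\textbf{Step 1: reduce to unions of whole fibers.} We may enlarge $U$ to a union of complete fibers $\mathcal F_{x_1}^{(0)}\cup\dots\cup\mathcal F_{x_m}^{(0)}$. This is harmless because full row rank of a row set implies full row rank of any subset of those rows.

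\textbf{Step 2: track fibers under forward generators.} For $v=(0,\sigma(x)y)$ and $s\in S_+$, right multiplication gives
\[
vs=(1,\theta(\sigma(x)y)h_s).
\]
Its $T$-coordinate is $\bar\theta(x)x_s$, since $q\circ\theta=\bar\theta\circ q$. Hence each $s$ maps the fiber $\mathcal F_x^{(0)}$ bijectively onto $\mathcal F_{\bar\theta(x)x_s}^{(1)}$. Left invariance of $\prec$ on $T$ gives $\bar\theta(x)x_s\preceq\bar\theta(x)x_*$, with equality exactly when $s\in S_*$.

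\textbf{Step 3: choose $K$ and order the blocks.} Set
\[
K=\bigcup_i\mathcal F_{\bar\theta(x_i)x_*}^{(1)},
\qquad z_i:=\bar\theta(x_i)x_*.
\]
The $z_i$ are distinct because $\bar\theta$ is injective, so $|K|=|U|$. Order the blocks so that $z_1\prec\dots\prec z_m$. Exactly as in the left-orderable proof, a nonzero block in row $i$ and column $j$ forces $z_j\preceq z_i$, so the matrix $M_{U,K}$ is block lower triangular. Its determinant is therefore the product of the diagonal blocks.

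\textbf{Step 4: identify the diagonal blocks (the main point to verify).} The diagonal block at $i$ is the adjacency matrix from $\mathcal F_{x_i}^{(0)}$ to $\mathcal F_{z_i}^{(1)}$ generated by $S_*$. I need to show it equals $D_F$ up to row and column permutations. I would use a left translation by the element $g=(0,\sigma(x')\,)$ with $q(\theta(\cdot))$ matching, namely $g=(0,\sigma(x_i))$.

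Left translation is a graph automorphism preserving layers. It maps $\mathcal F_e^{(0)}$ onto $\mathcal F_{x_i}^{(0)}$, since $\sigma(x_i)F$ is that fiber ($F$ is normal). From $g(1,h)=(1,\theta(\sigma(x_i))h)$, it also maps $\mathcal F_{x_*}^{(1)}$ onto the fiber with $T$-coordinate $\bar\theta(x_i)x_*=z_i$.

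Because this is a graph automorphism, it carries the $S_*$-edges between the base fibers to edges between the image fibers. The only edges between these two fibers come from $S_*$: a generator $s\notin S_*$ sends $\mathcal F_{x_i}^{(0)}$ to a fiber with strictly smaller $T$-coordinate. Hence the diagonal block coincides with $D_F$ after reindexing.

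It follows that $\det M_{U,K}=\pm(\det D_F)^m\neq0$. The delicate points to check carefully are the normality and fiber bookkeeping in the semidirect product coordinates, and the fact that no non-extremal generator lands in the same target fiber. Both reduce to strictness of the order on $T$.
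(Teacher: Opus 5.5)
Your Steps 1--4 are correct and match the paper's forward argument: enlarge to whole $F$-fibers, take $K=Us_*$, order by $z_i=\bar\theta(x_i)x_*$ to make $M_{U,K}$ block lower triangular, and identify each diagonal block with $D_F$ via left translation by $(0,\sigma(x_i))$.

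The gap is the backward direction, which you handle with ``the same argument with $\theta^{-1}$, as before.'' In Theorem~\ref{thm:left_orderable_surjectivity} this works because every diagonal entry is automatically $1$. Here, running the triangular argument on $S_-$ makes the diagonal blocks copies of a \emph{different} extremal block, generated by the elements of $S_-$ whose $T$-coordinate is $\prec$-maximal. Nonsingularity of that block is not assumed and does not follow from $\det D_F\neq0$. For example, take $G=\mathbb Z\times(\mathbb Z\times\mathbb Z_2)$ with $\theta=\mathrm{id}$, $T=\mathbb Z$ with its usual order, $F=\mathbb Z_2$, $S_+=\{(1,1,0),(1,0,0),(1,0,1)\}$ and $S=S_+\cup(-S_+)$. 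Then $S_*=\{(1,1,0)\}$ and $D_F$ is a permutation matrix. However, the maximal $T$-coordinate in $S_-$ is $0$, attained by $(-1,0,0)$ and $(-1,0,1)$, and these give the block $\begin{pmatrix}1&1\\1&1\end{pmatrix}$. Your backward argument therefore yields $\det M_{U,K}=0$ and proves nothing.

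The paper never uses $S_-$. Left translations by powers of $t$ identify each $M_{n,n-1}$ with $M_{1,0}=M_{0,1}^{T}$, so it suffices to show that finitely many columns of $M_{0,1}$ are independent. Given a finite $W_0\subset V_1$, enlarge it to a union $W$ of $F$-fibers and put $X=Ws_*^{-1}$. Since $\mathcal F_z^{(1)}s_*^{-1}=\mathcal F^{(0)}_{\bar\theta^{-1}(zx_*^{-1})}$, the set $X$ is a union of $F$-fibers with $Xs_*=W$. Your Steps 2--4 with $U=X$ and $K=W$ then give $\det M_{X,W}\neq0$, so the columns indexed by $W_0$ are independent.

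Alternatively, you could keep your route by running the backward step with a different left order on $T$, for instance $a\prec' b$ iff $x_*^{-1}\bar\theta(b)x_*\prec x_*^{-1}\bar\theta(a)x_*$. For this order $S_*^{-1}$ is exactly the extremal set, and the diagonal blocks are $D_F^{T}$ up to permutations. That choice has to be made and justified explicitly; simply reusing $\prec$ fails.
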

	
	\begin{proof}
		Choose $s_*\in S_*$. For $v=(0,\sigma(x)y)\in\mathcal F_x^{(0)}$, right multiplication by $s\in S_+$ gives
		\[
			vs=(1,\theta(\sigma(x)y)h_s)\in\mathcal F_{\bar\theta(x)x_s}^{(1)}.
		\]
		The left-invariance of $\prec$ implies $\bar\theta(x)x_s\prec\bar\theta(x)x_*$ for all $s\in S_+\setminus S_*$.
		
		Let $U_0\subset V_0$ be finite, enlarge it to the union
		\[
			U=\bigsqcup_{i=1}^m\mathcal F_{x_i}^{(0)}
		\]
		of the $F$-fibers that it meets, and put $K=Us_*$. Right multiplication by $s_*$ is bijective, so $|K|=|U|=mN$. Define
		\[
			z_i:=\bar\theta(x_i)x_*,
			\qquad
			\mathcal F_{x_i}^{(0)}s_*=\mathcal F_{z_i}^{(1)}.
		\]
		The $z_i$ are distinct because $x\mapsto\bar\theta(x)x_*$ is bijective. Index the fiber pairs $(\mathcal F_{x_i}^{(0)},\mathcal F_{z_i}^{(1)})$ so that $z_1\prec\cdots\prec z_m$. If the $(i,j)$-block of $M_{U,K}$ is nonzero, then for some $s\in S_+$,
		\[
			z_j=\bar\theta(x_i)x_s\preceq\bar\theta(x_i)x_*=z_i.
		\]
		Hence $j\leq i$, and $M_{U,K}$ is block lower triangular.
		
		The diagonal blocks are generated precisely by $S_*$. Left translation by $(0,\sigma(x_i))$ maps the pair $(\mathcal F_e^{(0)},\mathcal F_{x_*}^{(1)})$ to $(\mathcal F_{x_i}^{(0)},\mathcal F_{z_i}^{(1)})$ and preserves the right-generating labels. Thus each diagonal block equals $D_F$ up to row and column permutations, and
		\[
			\det M_{U,K}=\pm(\det D_F)^m\neq0.
		\]
		The rows indexed by $U$, and hence those indexed by $U_0$, are therefore independent in $M_{0,1}$. This proves finite-row independence of $M_{0,1}$.
		
		For the opposite direction, let $W_0\subset V_1$ be finite, enlarge it to the union $W$ of the $F$-fibers that it meets, and put $X=Ws_*^{-1}$. Since
		\[
			\mathcal F_z^{(1)}s_*^{-1}
			=\mathcal F_{\bar\theta^{-1}(zx_*^{-1})}^{(0)},
		\]
		$X$ is a finite union of $F$-fibers and $W=Xs_*$. The preceding construction, applied to $U=X$ and $K=W$, gives $\det M_{X,W}\neq0$. Thus the columns indexed by $W_0$ are independent in $M_{0,1}$. Since $W_0\subset V_1$ was arbitrary, $M_{1,0}=M_{0,1}^{T}$ has finite-row independence. Left translations by powers of $t$ carry these conclusions to every layer. The global coupling condition follows.
	\end{proof}
	\FloatBarrier
	\section{Further Extensions}
	
	The layer-by-layer extension method also applies to several related operators. Given $a>0$ and a potential $\rho:V\to\mathbb R$, define $L_{a,\rho}$ by
	\[
		(L_{a,\rho}u)(x):=-a\Delta u(x)+\rho(x)u(x).
	\]
	For every $F:V\times\mathbb R\to\mathbb R$, the equation $(L_{a,\rho}u)(x)=F(x,u(x))$ is equivalent to
	\[
		\Delta u(x)=\frac{\rho(x)u(x)-F(x,u(x))}{a},
	\]
	so Theorem~\ref{thm:mainSur} applies under the same global coupling condition. The argument extends further to row-finite operators of hopping range one whose requisite interlayer coupling operators have finite-row independence relative to the chosen decomposition. Positive weighted graph Laplacians are governed by weighted coupling operators; unique matchings in finite submatrices of their coupling matrices remain sufficient, whereas the Cayley-graph determinant classifications require translation-compatible weights.

	The remaining variants require inversion of different objects: an outer two-step coupling operator for $\Delta^2$, the scalar bijection $\Phi_p$ under a unique-neighbor hypothesis for $\Delta_p$, and complex interlayer coupling operators satisfying the same finite-row criterion for magnetic Laplacians.

	\subsection{Bi-Laplacian}
	Write $A_n:=\Delta_n$ for the diagonal block of $\Delta$ on $V_n$. Squaring the block-tridiagonal representation of $\Delta$ gives
	\begin{align}
		(\Delta^2u)|_{V_n}
		={}&M_{n,n-1}M_{n-1,n-2}u_{n-2} \nonumber\\
		&+\bigl(M_{n,n-1}A_{n-1}+A_nM_{n,n-1}\bigr)u_{n-1} \nonumber\\
		&+\bigl(M_{n,n-1}M_{n-1,n}+A_n^2+M_{n,n+1}M_{n+1,n}\bigr)u_n \label{eq:bilaplacian_blocks}\\
		&+\bigl(A_nM_{n,n+1}+M_{n,n+1}A_{n+1}\bigr)u_{n+1} \nonumber\\
		&+M_{n,n+1}M_{n+1,n+2}u_{n+2}, \nonumber
	\end{align}
	with terms involving nonexistent layers omitted at the boundary of an $\mathbb N$-layered decomposition. The outer two-step coupling operators are
	\[
		B_{n,n+2}^+:=M_{n,n+1}M_{n+1,n+2},
		\qquad
		B_{n,n-2}^-:=M_{n,n-1}M_{n-1,n-2}.
	\]
	The diagonal blocks in~\eqref{eq:bilaplacian_blocks} do not contribute to the outermost layers.
	
	\begin{proposition}
		\label{prop:bilaplacian_extension}
		Let $G$ be a connected, locally finite graph with an $\mathcal I$-layered decomposition, and let $f:V\times\mathbb R\to\mathbb R$.

		If $\mathcal I=\mathbb N$ and $B_{n,n+2}^+$ satisfies the finite-row independence condition for every $n\geq0$, then arbitrary data on $V_0$ and $V_1$ extend to a global solution of $\Delta^2u(x)=f(x,u(x))$.
		
		If $\mathcal I=\mathbb Z$ and both $B_{n,n+2}^+$ and $B_{n,n-2}^-$ satisfy the finite-row independence condition for every $n\in\mathbb Z$, then for every $k\in\mathbb Z$, arbitrary data on the four consecutive layers $V_k,V_{k+1},V_{k+2},V_{k+3}$ extend to a global solution. In either case the equation has infinitely many solutions.
	\end{proposition}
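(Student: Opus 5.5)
The plan is to mimic the proof of Theorem~\ref{thm:mainSur}, using \eqref{eq:bilaplacian_blocks} as a five-term recurrence. First, by Proposition~\ref{prop:eig_solvability}, applied to a row-finite operator in place of a single coupling operator, finite-row independence of $B^{\pm}$ gives surjectivity. This needs two small checks. The products $B_{n,n+2}^+=M_{n,n+1}M_{n+1,n+2}$ are well-defined continuous operators $\mathbb R^{V_{n+2}}\to\mathbb R^{V_n}$. Each row is finitely supported, since a row of $M_{n,n+1}$ has finite support and each corresponding row of $M_{n+1,n+2}$ does too; this is local finiteness. The Eidelheit argument in the proof of Proposition~\ref{prop:eig_solvability} only used that the operator is continuous from $\mathbb R^{V_j}$ with row-finite matrix. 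Condition~(2) is then automatic, and condition~(1) is finite-row independence. The finite-layer cases go through verbatim: if $V_n$ is finite the criterion is the rank criterion, and if $V_n$ is infinite while $V_{n+2}$ is finite, both conditions fail.

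For $\mathcal I=\mathbb N$, prescribe $u_0,u_1$ arbitrarily. Inductively, suppose $u_0,\dots,u_{n+1}$ are known for some $n\ge0$, and require the equation on $V_n$. By \eqref{eq:bilaplacian_blocks}, with boundary terms omitted, this reads
\[
B_{n,n+2}^+u_{n+2}=f(\cdot,u_n)-(\text{terms in }u_{n-2},u_{n-1},u_n,u_{n+1}).
\]
The right-hand side is a known element of $\mathbb R^{V_n}$, so surjectivity produces $u_{n+2}$. The key point to verify is locality: the equation on $V_m$ involves only $u_{m-2},\dots,u_{m+2}$. Hence choosing $u_{n+2}$ enforces the equation on $V_n$ without affecting the equations on $V_0,\dots,V_{n-1}$. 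Those equations involve at most $u_{n+1}$, and they were already satisfied by earlier choices. The equations on $V_{n+1}$ and later are handled at later steps. Induction over $n$ defines $u$ on all of $V$, and it satisfies every equation.

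For $\mathcal I=\mathbb Z$, prescribe $u_k,\dots,u_{k+3}$. The forward step is as above, starting with $n=k+2$: it determines $u_{k+4}$ from $u_k,\dots,u_{k+3}$, and then continues upward. The backward step uses the equation on $V_{k+1}$ to solve
\[
B^-_{k+1,k-1}u_{k-1}=f(\cdot,u_{k+1})-(\text{terms in }u_k,\dots,u_{k+3}),
\]
and then continues downward, each time using the equation on $V_m$ to produce $u_{m-2}$.

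The point needing care is bookkeeping, so that each layer's equation is imposed exactly once and no layer is assigned twice. The equations on $V_{k+2}$ and above are enforced forward, and those on $V_{k+1}$ and below are enforced backward. Every equation on $V_m$ is therefore enforced at the step when its last outer layer is chosen. All other terms in it are already fixed, and later choices never touch it. Since the data were arbitrary and each target space contains a copy of $\mathbb R$, there are infinitely many solutions.

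The main obstacle I expect is the first step: justifying that Proposition~\ref{prop:eig_solvability} applies to the composed operators $B^\pm$, which are no longer $0$--$1$ adjacency matrices. I would handle it by stating explicitly that the proof used only row-finiteness and continuity.
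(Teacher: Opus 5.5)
Your proposal is correct and follows essentially the same route as the paper. You extend the Eidelheit argument of Proposition~\ref{prop:eig_solvability} to the continuous, row-finite two-step operators $B^\pm$ to get surjectivity. You then solve the five-term recurrence \eqref{eq:bilaplacian_blocks} layer by layer: in the $\mathbb Z$-layered case the equations on $V_{k+2},V_{k+3},\dots$ are enforced forward via $B^+$ and those on $V_{k+1},V_k,\dots$ backward via $B^-$, exactly as in the paper's proof.
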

	
	\begin{proof}
		A connected locally finite graph is countable, being the union of the finite balls about a fixed root. The argument of Proposition~\ref{prop:eig_solvability} therefore applies to any continuous row-finite linear map $A:\mathbb R^X\to\mathbb R^Y$ between the layerwise product spaces. Local finiteness makes each two-step operator $B_{n,n+2}^+$ and $B_{n,n-2}^-$ continuous and row-finite; the assumed finite-row independence makes these operators surjective. In the $\mathbb N$-layered case, prescribe arbitrary $(u_0,u_1)$. Once $u_0,\ldots,u_{n+1}$ have been chosen, equation~\eqref{eq:bilaplacian_blocks} on $V_n$ has the form
		\[
			B_{n,n+2}^+u_{n+2}=f(\cdot,u_n)-Q_n(u_{n-2},u_{n-1},u_n,u_{n+1}),
		\]
		where $Q_n$ is the sum of the already known terms, with nonexistent negative-index layers omitted. Surjectivity of $B_{n,n+2}^+$ permits a choice of $u_{n+2}$, and induction enforces the equation on every layer.
		
		For the $\mathbb Z$-layered case, prescribe arbitrary data on $V_k,V_{k+1},V_{k+2},V_{k+3}$. Surjectivity of $B_{k+1,k-1}^-$ provides $u_{k-1}$ solving the equation on $V_{k+1}$, while surjectivity of $B_{k+2,k+4}^+$ provides $u_{k+4}$ solving the equation on $V_{k+2}$. Proceeding through the equations on $V_k,V_{k-1},\ldots$ to the left and on $V_{k+3},V_{k+4},\ldots$ to the right successively selects further outer layers. At each step the new outer layer enters no equation already enforced. This produces a global solution for every prescribed four-layer datum. The preimages need not be unique, and distinct initial data yield distinct solutions.
	\end{proof}
	
	For the decomposition $\mathbb Z^d\cong\mathbb Z\times\mathbb Z^{d-1}$, all adjacent-layer and two-step coupling operators are identities under the natural identifications; Proposition~\ref{prop:bilaplacian_extension} applies.
	
	\Needspace{22\baselineskip}
	\subsection{\texorpdfstring{$p$}{p}-Laplacian}
	For $p>1$, let $\Phi_p(t):=|t|^{p-2}t$, with $\Phi_p(0):=0$, and define the discrete $p$-Laplacian by
	\[
		\Delta_p u(x):=\sum_{y\sim x}\Phi_p\bigl(u(y)-u(x)\bigr).
	\]
	We use the standard convention for the graph $p$-Laplacian~\cite{HolopainenSoardi1997}. The dependence on the next-layer variables is nonlinear when $p\neq2$. Under a unique-neighbor hypothesis, however, each equation contains exactly one new value and can be inverted.
	
	\begin{theorem}
		\label{thm:p_laplacian}
		Let $p>1$, and let $G=(V,E)$ be a connected, locally finite, countably infinite graph with an $\mathcal I$-layered decomposition. For every $n\geq0$, assume that there is an injection $\phi_n^+:V_n\to V_{n+1}$ satisfying
		\[
			N(\{x\})\cap V_{n+1}=\{\phi_n^+(x)\},\qquad x\in V_n.
		\]
		In the $\mathbb Z$-layered case, assume likewise that for every $n\leq-1$ there is an injection $\phi_n^-:V_n\to V_{n-1}$ satisfying
		\[
			N(\{x\})\cap V_{n-1}=\{\phi_n^-(x)\},\qquad x\in V_n.
		\]

		Then, for every $f:V\times\mathbb R\to\mathbb R$, arbitrary data on $V_0$ in the $\mathbb N$-layered case, or on $V_{-1}\cup V_0$ in the $\mathbb Z$-layered case, extend to a global solution of $\Delta_pu(x)=f(x,u(x))$. In particular, the equation has infinitely many solutions.
	\end{theorem}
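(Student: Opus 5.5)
The plan is to mimic the proof of Theorem~\ref{thm:mainSur}, but invert the scalar map $\Phi_p$ instead of a linear coupling operator. First record that $\Phi_p:\mathbb R\to\mathbb R$ is a continuous, strictly increasing bijection for every $p>1$. Its inverse is $\Phi_p^{-1}(s)=|s|^{1/(p-1)-1}s$, that is $\Phi_{p'}$ with $1/p+1/p'=1$. Only bijectivity will be used, not continuity.

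Next, write the equation on a vertex $x\in V_n$ layerwise, using Definition~\ref{def:layered_decomposition}. Under the unique-neighbor hypothesis, the forward neighbors of $x$ in $V_{n+1}$ consist of the single vertex $\phi_n^+(x)$. The equation $\Delta_pu(x)=f(x,u(x))$ therefore reads
\[
\Phi_p\bigl(u(\phi_n^+(x))-u(x)\bigr)=f(x,u(x))-\sum_{\substack{y\sim x\\ y\in V_{n-1}\cup V_n}}\Phi_p\bigl(u(y)-u(x)\bigr)=:R_n(x).
\]
The right-hand side involves only $u_{n-1}$ and $u_n$, and the sum is finite by local finiteness. Given these two layers, we set
\[
u(\phi_n^+(x)):=u(x)+\Phi_p^{-1}(R_n(x)),\qquad x\in V_n.
\]
Because $\phi_n^+$ is injective, these prescriptions assign values to distinct vertices and cannot conflict. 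On vertices of $V_{n+1}\setminus\phi_n^+(V_n)$ we choose $u$ arbitrarily, for instance $0$. These free vertices do not enter any equation on $V_n$, since each $x\in V_n$ sees only $\phi_n^+(x)$ in $V_{n+1}$. Thus the equation holds on all of $V_n$. The new layer $u_{n+1}$ enters no equation on $V_0,\dots,V_{n-1}$, because those layers have no neighbors in $V_{n+1}$, so previously enforced equations are preserved.

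In the $\mathbb N$-layered case, we prescribe $u_0$ with $V_{-1}=\emptyset$ and induct on $n\ge0$, which produces $u$ on every layer satisfying the equation everywhere. In the $\mathbb Z$-layered case, we prescribe $(u_{-1},u_0)$. The forward step above for $n\ge0$ produces $u_1,u_2,\dots$. The symmetric backward step, using $\phi_n^-$ for $n\le-1$, produces $u_{-2},u_{-3},\dots$; it solves $\Phi_p(u(\phi_n^-(x))-u(x))=\cdots$ with the right-hand side depending on $u_n$ and $u_{n+1}$. The two inductions touch disjoint sets of new layers, and each equation on $V_n$ is enforced exactly once: by the forward step for $n\ge0$ and by the backward step for $n\le-1$. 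This gives a global solution. Since the prescribed initial data range over $\mathbb R^{V_0}$ (respectively $\mathbb R^{V_{-1}}\times\mathbb R^{V_0}$), which is infinite, we obtain infinitely many solutions.

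The only delicate point is the bookkeeping that no equation is disturbed after it has been enforced. This relies on neighbors lying in adjacent layers and on the fact that each $x\in V_n$ has exactly one neighbor in the layer being constructed. That single neighbor is what makes the nonlinear equation in the new variable a scalar equation, solvable by bijectivity of $\Phi_p$. No Eidelheit-type argument is needed.
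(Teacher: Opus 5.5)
Your proposal is correct and follows the paper's proof: you isolate the unique neighbor $\phi_n^{\pm}(x)$ in the layer being constructed and solve the resulting scalar equation using bijectivity of $\Phi_p$. As in the paper, you use injectivity to make the assignments consistent, leave $V_{n+1}\setminus\phi_n^+(V_n)$ free, and induct forward (and backward in the $\mathbb Z$-layered case), with slightly more explicit bookkeeping of which equation is enforced at each step.
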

	
	\begin{proof}
		The map $\Phi_p$ is a strictly increasing bijection of $\mathbb R$ with
		\[
			\Phi_p^{-1}(s)=\operatorname{sgn}(s)|s|^{1/(p-1)}.
		\]
		For $\mathcal I=\mathbb N$, choose $u_0$ arbitrarily. Once $u_{n-1}$ and $u_n$ are known, with $V_{-1}=\varnothing$ when $n=0$, the equation at $x\in V_n$ uniquely determines
		\[
			u_{n+1}(\phi_n^+(x))
			=
			u_n(x)+\Phi_p^{-1}\!\left(
			f(x,u_n(x))-
			\sum_{\substack{y\sim x\\y\in V_{n-1}\cup V_n}}
			\Phi_p\bigl(u(y)-u_n(x)\bigr)
			\right).
		\]
		Injectivity makes these assignments consistent. The singleton-neighbor condition also implies that vertices outside $\operatorname{im}\phi_n^+$ have no neighbors in $V_n$, so the remaining values on $V_{n+1}$ are arbitrary and do not alter any equation already enforced. Iteration gives a global solution.
		
		For $\mathcal I=\mathbb Z$, choose $(u_{-1},u_0)$ arbitrarily and use the same formula with $\phi_n^+$ to extend forward. The backward extension follows analogously using $\phi_n^-$ for $n\leq-1$. Varying the prescribed pair yields infinitely many global solutions.
	\end{proof}
	
	When next-layer variables are shared by several equations, scalar inversion need not decouple the system. For $p=2$, linearity gives the extension conclusion under finite-row independence by Theorem~\ref{thm:mainSur}; for $p\neq2$, the linear finite-row argument does not directly provide the required nonlinear inversion. The unique-neighbor hypothesis of Theorem~\ref{thm:p_laplacian} holds for $\mathbb Z^d\cong\mathbb Z\times\mathbb Z^{d-1}$, the brick-wall hexagonal graph, and the layered Cayley graph of the discrete Heisenberg group.
	
	\Needspace{10\baselineskip}
	\subsection{Magnetic Laplacians}
	We use the standard convention of~\cite{DodziukMathai2006}. With our sign convention, let
	\[
		\vec E:=\{(x,y)\in V\times V:x\sim y\}
	\]
	be the set of oriented edges. A magnetic phase is a map
	\[
		\alpha:\vec E\to\mathbb S^1
		\qquad\text{such that}\qquad
		\alpha_{yx}=\overline{\alpha_{xy}}=\alpha_{xy}^{-1},
		\qquad |\alpha_{xy}|=1,
	\]
	where $\mathbb S^1=\{z\in\mathbb C:|z|=1\}$. For $u\in\mathbb C^V$, define the magnetic Laplacian by
	\[
		\Delta_\alpha u(x)
		:=
		\sum_{y\sim x}\bigl(\alpha_{xy}u(y)-u(x)\bigr).
	\]
	The layered argument is unchanged: the coupling operators act on complex-valued functions, with matrix entries
	\[
		M_{i,j}^\alpha(x,y)
		=
		\begin{cases}
			\alpha_{xy},&x\sim y,\\
			0,&x\not\sim y.
		\end{cases}
	\]
	By the complex form of Lemma~\ref{lem:eidelheit_strict}, $M_{i,j}^\alpha$ is surjective if and only if every finite row restriction of its coupling matrix has full complex row rank. Equivalently, for every finite $U\subset V_i$ there is $K\subset V_j$, with $|K|=|U|$, such that $\det M_{U,K}^\alpha\neq0$. This is the interlayer analogue of the magnetic Schr\"odinger criterion in~\cite{KobersteinSchmidt2020}.
	
	The phase-weighted determinant terms of distinct perfect matchings may cancel. For a unique matching $\pi:U\to K$, however, one has
	\[
		\det M_{U,K}^\alpha
		=
		\operatorname{sgn}(\pi)
		\prod_{x\in U}\alpha_{x,\pi(x)},
		\qquad
		\bigl|\det M_{U,K}^\alpha\bigr|=1.
	\]
	Every preceding application based on a unique matching therefore survives an arbitrary magnetic phase. If all required magnetic coupling operators have finite-row independence over $\mathbb C$, the proof of Theorem~\ref{thm:mainSur} extends arbitrary complex data on $V_0$ in the one-sided case, or on $V_{-1}$ and $V_0$ in the two-sided case, to a solution of $\Delta_\alpha u(x)=F(x,u(x))$ for every $F:V\times\mathbb C\to\mathbb C$. This conclusion need not follow from an unphased nonsingular minor when several perfect matchings contribute, since their phase-weighted determinant terms may cancel.
	
	The conclusion also covers $H_{\alpha,q}:=-\Delta_\alpha+q$ for $q:V\to\mathbb R$, since $H_{\alpha,q}u=F(x,u(x))$ is equivalent to $\Delta_\alpha u=q(x)u-F(x,u(x))$.
	
	All extension results above are formulated in the product topology and provide no a priori control of positivity, growth, summability, or energy. These results leave open whether the coupling operators admit quantitative right inverses in weighted $\ell^p$ or prescribed-growth spaces and which genuinely nonlinear interlayer systems remain solvable beyond the unique-neighbor setting.
	
	\section*{Acknowledgments}
	We thank Dong Ye for stimulating discussions on nonlinear elliptic equations and for motivating the solvability question for $\Delta u = e^u$ on lattice graphs. B.H. is supported by the National Natural Science Foundation of China (Grant No. 12371056). Y.L. is supported by the National Natural Science Foundation of China (Grant Nos. 12071245 and 12471088).
	
	\bibliographystyle{plain}
	\bibliography{EIG}
	
\end{document}